\newcommand{\GL}{\ensuremath \mathrm{GL}}
\newcommand{\GO}{\ensuremath \mathrm{GO}}
\newcommand{\SO}{\ensuremath \mathrm{SO}}
\newcommand{\Sp}{\ensuremath \mathrm{Sp}}
\newcommand{\CSp}{\ensuremath \mathrm{CSp}}
\newcommand{\SL}{\ensuremath \mathrm{SL}}
\newcommand{\St}{\ensuremath \mathrm{St}}
\newcommand{\tr}{\ensuremath \mathrm{tr}}
\newcommand{\reg}{\ensuremath \mathrm{reg}}
\newcommand{\F}{\ensuremath \mathbb{F}}
\newcommand{\Irr}{\ensuremath \mathrm{Irr}}
\newcommand{\Ocal}{\ensuremath \mathcal{O}}
\newcommand{\Z}{\ensuremath \mathbb{Z}}
\newcommand{\C}{\ensuremath \mathbb{C}}
\newcommand{\maop}[1]{%
\ensuremath{\mathop{\operator@font #1}\nolimits}}
\newcommand{\maopl}[1]{%
\ensuremath{\mathop{\operator@font #1}\limits}}
\newcommand{\infl}{\maop{Infl}}
\newcommand{\G}{\bm{G}}
\newcommand{\T}{\bm{T}}
\newcommand{\W}{\bm{W}}
\newcommand{\R}[2]{\maop{R}^{#1}_{#2}}
\newcommand{\mbv}{\ensuremath \mathbf{v}}
\newcommand{\mbx}{\ensuremath \mathbf{x}}
\newcommand{\GAP}{\textsf{GAP}}
\newcommand{\CHEVIE}{\textsf{CHEVIE}}
\theoremstyle{plain}
\newtheorem{theorem}{Theorem}[section]
\newtheorem{lemma}[theorem]{Lemma}
\newtheorem{corollary}[theorem]{Corollary}
\theoremstyle{definition}
\theoremstyle{remark}
\newtheorem{remark}[theorem]{Remark}
\newtheorem{hypothesis}[theorem]{Hypothesis}
\date{\today}
\author{Frank Himstedt and Felix Noeske}
\title{Decomposition numbers of $\SO_7(q)$ and $\Sp_6(q)$}
\begin{document}
\maketitle
\begin{abstract}
We complete the $\ell$-modular decomposition numbers of the unipotent
characters in the principal block of the special orthogonal groups
$\SO_7(q)$ and the symplectic groups $\Sp_6(q)$ for all prime
powers $q$ and all odd primes~$\ell$ different from the defining
characteristic.
\end{abstract}

\section{Introduction}

In the representation theory of finite groups, the $\ell$-modular
decomposition numbers link the ordinary representations of a group to
its representations over fields of prime characteristic $\ell$. The
decomposition numbers of finite groups of Lie type of small rank in
the case that $\ell$ differs from the defining characteristic were
computed by several authors; see for example \cite{Dudas,
GeckSU3, GeckDissPub, HimHua3D4, Him2F4, HissG2, LandrockMichler, 
OkuyamaWaki, OkuWakiSU3, Waki_G2, WhiteSp4oddchar2, WhiteSp4odd,
WhiteSp4even, WhiteSp6even}.

Finite groups of Lie type which are dual to each other have many
properties in common. For example, they have isomorphic Weyl
groups. It is a natural question to what extent these similarities
carry over to modular representations. The smallest example 
(in terms of the rank) of two untwisted groups in duality with
non-isomorphic root systems are the special orthogonal groups
$\SO_7(q)$ and the symplectic groups $\Sp_6(q)$, having root systems
of type $B_3$ and $C_3$, respectively. This note is a contribution to
the modular representation theory in non-defining characteristic of
these groups. Since one expects, roughly speaking, that the
representation theory of a given block of a finite group of Lie type
may be reduced to the representation theory of unipotent blocks (see
\cite{BonnafeRouquier} and \cite{Enguehard} for important results in
this direction) the unipotent blocks are of particular relevance. 

Let $q$ be a power of a prime $p$ and $\ell$ an odd prime different
from $p$. We give the $\ell$-modular decomposition
matrices of the unipotent blocks of the groups $\SO_7(q)$ and $\Sp_6(q)$.
By the theory of basic sets \cite{GeckHissBasicSets} the
decomposition matrices can be reconstructed from the decomposition
numbers of the ordinary unipotent irreducible characters via so-called
basic relations. Only the principal block for $\ell \mid q+1$
leads to new results: all other blocks and other primes $\ell$ yield
cyclic blocks which have already been treated by Fong and Srinivasan 
\cite{FongSrinivasanTrees} and White \cite{WhiteSp6even}, or in
the case of $\ell \mid q-1$ results by Puig \cite{Puig} and
Gruber and Hiss \cite{GruberHiss} readily give the decomposition numbers. 

The $\ell$-modular decomposition matrices of the unipotent blocks of
$\SO_7(q)$ and $\Sp_6(q)$ are completed by Theorem~\ref{thm:decnumbers}, 
the main result of this paper. It gives all $\ell$-modular
decomposition numbers of the ordinary unipotent irreducible characters
in the principal block of $\SO_7(q)$ and $\Sp_6(q)$ for $\ell \mid q+1$. 
It turns out that, with respect to a suitable ordering of rows and
columns, the decomposition matrices of the unipotent blocks of
$\SO_7(q)$ and $\Sp_6(q)$ coincide. Furthermore, 
Theorem~\ref{thm:decnumbers} confirms \cite[Conjecture~3.4]{GeckHiss} 
in the special case of $\SO_7(q)$ and $\Sp_6(q)$.

It should be noted that for the symplectic groups $\Sp_6(q)$
most of the decomposition numbers were previously computed
by K\"ohler~\cite{DissKoehler}, An, Hiss~\cite{AnHissSteinberg} and 
White~\cite{WhiteSp6even} except for two parameters in the Steinberg
character. Theorem~\ref{thm:decnumbers} gives the values of these
parameters. 

The paper is structured as follows: We begin by detailing in
Section~\ref{sec:symporth} the groups considered with particular
emphasis on the Lie theoretic setting and define certain maximal
parabolic subgroups. Section~\ref{sec:chars} focuses on the ordinary 
characters of the groups introduced. We give constructions for the
characters of the parabolic subgroups, fix notation for the
ordinary unipotent characters and use Deligne-Lusztig theory to
determine several non-unipotent characters of $\SO_5(q)$, $\SO_7(q)$ 
and $\Sp_6(q)$. Section~\ref{sec:decnumbers} treats modular
representations and contains the computation of the decomposition
numbers, mainly dealing with the case $\ell \mid q+1$.  

The tools we use are: Brauer character relations coming from
the $\ell$-regular restrictions of the non-unipotent characters
constructed in Section~\ref{sec:chars}, the decomposition numbers of
$\Sp_4(q)$~\cite{WhiteSp4odd, WhiteSp4even, OkuyamaWaki} and the
approximation of the decomposition matrix of 
$\Sp_6(q)$~\cite{DissKoehler, AnHissSteinberg, WhiteSp6even},
Harish-Chandra induction of projective indecomposable modules (PIMs)
of Levi subgroups, induction of PIMs of cyclic blocks of the
parabolic subgroups introduced in Section~\ref{sec:symporth}  
and a recent method by Dudas~\cite{Dudas} based on Deligne-Lusztig
varieties and their associated complexes of virtual projective
modules. As an intermediate result we obtain the decomposition numbers
of the unipotent characters of $\SO_5(q)$. 

The prime $\ell=2$ is bad for the orthogonal and the symplectic groups
in the sense of~\cite[p.~28]{Carter}. As a consequence the $2$-modular
representation theory of the groups involved seems to be fundamentally 
different and our approach does not lend itself to an easy
adaptation. For example, by~\cite{WhiteSp4oddchar2} the group 
$\Sp_4(q)$ has seven unipotent irreducible $2$-modular Brauer 
characters, but only six ordinary unipotent characters. 
Also, the blocks of the parabolic subgroups we consider are no longer
cyclic for $\ell=2$.

\section{Symplectic and orthogonal groups}
\label{sec:symporth}

In this section we give a description of the symplectic and orthogonal
groups as a pair of dual groups. For the symplectic groups we largely
follow~\cite[Section~2]{AnHissSteinberg}. 

\subsection{Lie theoretic setting}
\label{subsec:lie}
Let $q$ be a power of a prime $p$ and $\F$ an algebraic closure
of the finite field~$\F_q$ with~$q$ elements. We fix a positive
integer $m$ and set $n := 2m+1$ and $n^* := 2m$. Let 
$I_m \in \F^{m \times m}$ be the identity matrix, 
$J_m \in \F^{m \times m}$ the matrix with ones on the anti-diagonal
and zeros elsewhere,
\[
J_n' := \begin{bmatrix}0 & 0 & J_m\\0 & 2 & 0\\J_m & 0 &
  0\end{bmatrix} \quad \text{and} \quad
\tilde{J}_{n^*} := \begin{bmatrix}0 & J_m\\-J_m & 0\end{bmatrix}.
\]
We write vectors $\mbv \in \F^n$ as
$\mbv =\begin{bmatrix}v_m, \dots, v_1, v_0, v_1', \dots,
v_m'\end{bmatrix}^\tr$ and define a quadratic form~$Q_n$ on $\F^n$ by 
\[
Q_n(\mbv) := v_0^2 + \sum_{j=1}^m v_j v_j'.
\]
Let $\{e_m, \dots, e_1,e_0,e_1', \dots, e_m'\}$ be the standard basis
of the vector space $\F^n$. We define 
\begin{eqnarray*}
\G & := & \SO_n(\F) := \{ \mbx \in \SL_n(\F) \mid
Q_n(\mbx\mbv) = Q_n(\mbv) \,\, \text{for all} \,\, \mbv \in
\F^n \},\\
\G^* & := & \Sp_{n^*}(\F) := \{ \mbx \in \GL_{n^*}(\F) \mid \mbx^\tr
\tilde{J}_{n^*} \mbx = \tilde{J}_{n^*} \}.
\end{eqnarray*}
Let $F: \G \to \G$ and $F^*: \G^* \to \G^*$ be the Frobenius maps
raising each matrix entry to its $q$-th power. For 
$t_1, t_2, \dots, t_m \in \F^\times$ we set
\[
h(t_1, \dots, t_m) :=
\mathrm{diag}(t_m^{},\ldots,t_1^{},1,t_1^{-1},\ldots,t_m^{-1})
\in \G,
\]
so that $\T := \{h(t_1, \dots, t_m) \mid t_1, \dots, t_m \in
\F^\times \}$ is an $F$-stable maximally split torus of $\G$.
The root system $\Phi$ of the simple algebraic group $\G$ of
adjoint type with respect to $\T$ is of type $B_m$. We write
$\Phi^+$ for the set of positive roots corresponding to the set 
$\{\alpha_1, \dots, \alpha_m\}$ of simple roots where  
\begin{eqnarray*}
\alpha_1: h(t_1, \dots, t_m) & \mapsto & t_1, \quad \\
\alpha_j: h(t_1, \dots, t_m) & \mapsto & t_j t_{j-1}^{-1} \,\, \text{for}
\,\, j>1. 
\end{eqnarray*}
Note that $\alpha_1$ is a short root, while the other $\alpha_j$ are
long roots. 
The corresponding reflections in the Weyl group $\W$ of $\G$ are  
$w_j := s_j \T$ for $j=1,2,\dots,m$ where 
\begin{eqnarray*}
s_1 & := & \begin{bmatrix} I_{m-1} & 0 & 0 & 0 &
  0\\0&0&0&1&0\\0&0&-1&0&0\\0&1&0&0&0\\0&0&0&0&I_{m-1}\end{bmatrix}
\quad \text{and}\\
s_j & := & \begin{bmatrix}I_{m-j} & 0 & 0 & 0 & 0\\0 & J_2 & 0 & 0 &
  0\\0 & 0 & I_{2j-3} & 0 & 0\\0 & 0 & 0 & J_2 & 0\\0 & 0 & 0 & 0 &
  I_{m-j}\end{bmatrix} \quad \text{for} \ j \in \{2,3,\dots,m\}.
\end{eqnarray*}
In particular, $\W$ is generated by $\{w_1, \dots, w_m\}$ and we
have  
\begin{align} \label{eq:actwj}
\begin{split}
{}^{w_1} h(t_1, \dots, t_m) &= {}^{s_1} h(t_1, \dots, t_m) =
h(t_1^{-1}, t_2, \dots, t_m),\\
{}^{w_j} h(t_1, \dots, t_m) &= {}^{s_j} h(t_1, \dots, t_m) =
h(\dots, t_j, t_{j-1}, \dots) \,\, \text{for} \, j>1.
\end{split}
\end{align}

Similarly, for $t_1, t_2, \dots, t_m \in \F^\times$ we set
\[
h^*(t_1, \dots, t_m) :=
\mathrm{diag}(t_m^{},\ldots,t_1^{},t_1^{-1},\ldots,t_m^{-1})
\in \G^*,
\]
so that $\T^* := \{h^*(t_1, \dots, t_m) \mid t_1, \dots, t_m \in
\F^\times \}$ is an $F^*$-stable maximally split torus of $\G^*$.
The root system $\Phi^*$ of the simple simply-connected algebraic
group~$\G^*$ with respect to $\T^*$ is of type $C_m$. We write
$\Phi^{*+}$ for the set of positive roots corresponding to the set 
$\{\alpha_1^*, \dots, \alpha_m^*\}$ of simple roots where 
\[
\alpha_1^*: h^*(t_1, \dots, t_m) \mapsto t_1^2, \quad 
\alpha_j^*: h^*(t_1, \dots, t_m) \mapsto t_j t_{j-1}^{-1} \,\, \text{for}
\,\, j>1. 
\]
Note that $\alpha_1^*$ is a long root, while the other
$\alpha_j^*$ are short roots. 
The corresponding reflections in the Weyl group $\W^*$ of $\G^*$ are 
$w_j^* := s_j^* \T^*$ for $j=1,2,\dots,m$ where 
\begin{eqnarray*}
s_1^* & := & \begin{bmatrix} I_{m-1} & 0 & 0 &
  0\\0&0&1&0\\0&-1&0&0\\0&0&0&I_{m-1}\end{bmatrix}
\quad \text{and}\\
s_j^* & := & \begin{bmatrix}I_{m-j} & 0 & 0 & 0 & 0\\0 & J_2 & 0 & 0 &
  0\\0 & 0 & I_{2j-4} & 0 & 0\\0 & 0 & 0 & J_2 & 0\\0 & 0 & 0 & 0 &
  I_{m-j}\end{bmatrix} \quad \text{for} \ j \in \{2,3,\dots,m\}.
\end{eqnarray*}
In particular, $\W^*$ is generated by $\{w_1^*, \dots, w_m^*\}$ and we have 
\begin{align} \label{eq:actw*j}
\begin{split}
{}^{w_1^*} h^*(t_1, \dots, t_m) &= {}^{s_1^*} h^*(t_1, \dots, t_m) =
h^*(t_1^{-1}, t_2, \dots, t_m),\\
{}^{w_j^*} h^*(t_1, \dots, t_m) &= {}^{s_j^*} h^*(t_1, \dots, t_m) =
h^*(\dots, t_j, t_{j-1}, \dots) \,\, \text{for} \, j>1.
\end{split}
\end{align}
The pairs $(\G, F)$ and $(\G^*, F^*)$ are in duality in the sense
of~\cite[Chapter~4]{Carter} and there is an isomorphism 
$\delta: \W \to \W^*$ mapping $w_j \mapsto w^*_j$ for
$j=1,2,\dots,m$. 

We are interested in the finite groups 
\[
G := G_n := \SO_n(q) := \G^F \quad \text{and} \quad 
G^* := G^*_{n^*} := \Sp_{n^*}(q) := \G^{*F^*}
\]
with $|G| = |G^*| = q^{m^2}(q^{2m}-1)(q^{2m-2}-1) \cdots (q^2-1)$.
Additionally, we define $\GO^+_{n^*}(q)$, $\GO^-_{n^*}(q)$ 
as in~\cite{HimNoeResSO} and set
$G^*_0 := \Sp_0(q) := \GO^\pm_0(q) := \{1\}$. To apply Deligne-Lusztig
theory we consider $\G^*$ as a subgroup of 
\[
\widetilde{\G}^* := \CSp_{n^*}(\F) := \{ \mbx \in \GL_{n^*}(\F) \mid
\exists \lambda \in \F^\times: \mbx^\tr \tilde{J}_{n^*} \mbx = \lambda
\tilde{J}_{n^*} \}
\]
whose center is connected. We denote the Frobenius map 
$F^*: \widetilde{\G}^* \to \widetilde{\G}^*$ raising each matrix entry
to its $q$-th power also by $F^*$ and define  
$\widetilde{G}^* := \widetilde{G}^*_{n^*} := \widetilde{\G}^{*F^*}$.
In particular, we have $|\widetilde{G}^*| = (q-1) |G^*|$.

\begin{remark} \label{rmk:even_iso_Sp}
For even $q$ there is a natural isomorphism $\SO_n(q) \to \Sp_{n^*}(q)$
mapping each matrix $A$ to the matrix which is obtained from $A$ by
removing the middle row and the middle column.
\end{remark}

\subsection{Parabolic subgroups}
\label{subsec:parabolicsubgrps}

The group $G_n = \SO_n(q)$ acts naturally on the vector space $\F_q^n$
by multiplication from the left. Let $P_n$ be the stabilizer in
$G_n$ of the subspace generated by the basis vector 
$e_m = \begin{bmatrix}1,0,\dots,0\end{bmatrix}^\tr$. Then $P_n$ is a 
maximal parabolic subgroup of $G_n$ with Levi decomposition 
$P_n= L_n \ltimes U_n$, where
\[
L_n  =  \{\mathbf{s}_n(\mathbf{x}, a) \mid \mathbf{x} \in
G_{n-2}, a\in\F_q^\times\}, \quad
U_n  =  \{\mathbf{u}_n(\mathbf{v}) \mid \mathbf{v} \in
\F_q^{n-2}\}
\]
and
\begin{equation}
 \mathbf{s}_n(\mathbf{x}, a) := \begin{bmatrix}a & 0 & 0\\ 0 &
   \mathbf{x} & 0\\ 0 & 0 & a^{-1} \end{bmatrix}, \quad 
\mathbf{u}_n(\mathbf{v}) := \begin{bmatrix} 1 & -\mathbf{v}^\tr
  J_{n-2}' & -Q_{n-2}(\mbv)\\ 0 & 1 & \mathbf{v}\\ 0 & 0 & 1 \end{bmatrix}.
\end{equation}
The Levi subgroup $L_n$ decomposes as 
$L_n = L'_n \times A_n \cong \SO_{n-2}(q) \times \F_q^\times$ where 
\[
L'_n := \{\mathbf{s}_n(\mathbf{x}, 1) \mid \mathbf{x}
\in \SO_{n-2}(q) \} \ \text{and} \ A_n :=
\{\mathbf{s}_n(I_{n-2}, a) \mid a \in \F_q^\times \}. 
\]
The unipotent radical $U_n$ of $P_n$ is elementary
abelian of order $q^{n-2}$. For simplicity, we often write $P$, $L$,
$U$, $A$, $L'$ instead of $P_n$, $L_n$, $U_n$, $A_n$, $L'_n$,
respectively.

Analogously, the group $G^*_{n^*} = \Sp_{n^*}(q)$ acts naturally on
the vector space $\F_q^{n^*}$ by multiplication from the left and we
write $P^*_{n^*}$ for the stabilizer in $G^*_{n^*}$ of the subspace
generated by 
$e_m$. Hence, $P^*_{n^*}$ is a maximal parabolic subgroup with Levi
decomposition $P^*_{n^*} = L^*_{n^*} \ltimes U^*_{n^*}$. The Levi
subgroup $L^*_{n^*}$ is a direct product of a cyclic group 
$A^*_{n^*} \cong \F_q^\times$ and a subgroup 
$L^{*'}_{n^*} \cong \Sp_{n^*-2}(q)$. Details on the groups $A^*_{n^*}$,
$L^{*'}_{n^*}$ and on the unipotent radical $U^*_{n^*}$ are given
in~\cite[2.2]{AnHissSteinberg}. We often write $P^*$, $L^*$, $U^*$,
$A^*$, $L^{*'}$ instead of $P^*_{n^*}$, $L^*_{n^*}$, $U^*_{n^*}$,
$A^*_{n^*}$, $L^{*'}_{n^*}$, respectively. 
The orders of the parabolic subgroups are
\[
|P_n| = |P^*_{n^*}| = q^{m^2}(q-1)(q^{2m-2}-1)(q^{2m-4}-1) \cdots (q^2-1).
\]

\section{Characters}
\label{sec:chars}

In this section we collect some information on the ordinary characters
of the groups $G$, $G^*$, $\widetilde{G}^*$ and some of their parabolic
subgroups. For the symplectic groups we largely
follow \cite{AnHissSteinberg} and \cite{AnHissUnipotent}. 

\subsection{General character theoretic notation}
\label{subsec:genchar}

Let $K$ be a subgroup of a finite group $H$. We write $\Irr(H)$ for
the set of complex irreducible characters of $H$ and $1_H$ for the
trivial character. Let $\langle \cdot,\cdot \rangle_H$ be the usual
scalar product on the space of class functions of $H$. 
If $\chi$ is a character of~$H$ we write 
$\chi\smash\downarrow^H_K$ for the restriction of $\chi$ 
to~$K$. If~$\varphi$ is a character of $K$ we
write~$\varphi\smash\uparrow_K^H$ for the character of $H$ which is 
induced by~$\varphi$. If~$K \unlhd H$ and $\psi$ is a
character of the factor group $H/K$ then we denote its inflation
to~$H$ by $\infl_{H/K}^H(\psi)$. 

\subsection{Characters of parabolic subgroups}
\label{subsec:charsparabolic}

We give a brief summary of the construction of the ordinary
irreducible characters of the parabolic subgroup $P$ of $G$ in~\cite{HimNoeResSO}.
For $n \ge 5$ the conjugation action
of $P$ on $\Irr(U)$ has four orbits and we choose a set of
representatives $\{1_U, \lambda^0, \lambda^+, \lambda^-\}$ as
in~\cite{HimNoeResSO}. The characters $\lambda^0$, $\lambda^+$,
$\lambda^-$ are non-trivial linear characters and we say that 
$\chi \in \Irr(P)$ is of Type~1 if it covers~$1_U$ and it is of 
Type~$\varepsilon$ if it covers $\lambda^\varepsilon$ for 
$\varepsilon \in \{0,+,-\}$. The irreducible characters of Type~$1$ 
are obtained from $\Irr(L)$ via inflation, we set 
${}^1\psi_\sigma := \infl_L^P(\sigma)$ for $\sigma \in \Irr(L)$.
The inertia subgroup in $P$ of the linear character~$\lambda^0$ is 
$I^0 = \tilde{P}_{n-2} \ltimes U$ where $\tilde{P}_{n-2} \cong P_{n-2}$.
We identify $\Irr(\tilde{P}_{n-2})$ with $\Irr(P_{n-2})$ via the
bijection in~\cite[3.3]{HimNoeResSO} and extend $\lambda^0$ 
trivially to $\hat{\lambda}^0 \in \Irr(I^0)$. The irreducible
characters of Type~$0$ are parameterized by $\Irr(P_{n-2})$ via 
\[
{}^0\psi_\mu := (\hat{\lambda}^0 \cdot
\infl_{\tilde{P}_{n-2}}^{I^0}(\mu))\smash\uparrow_{I^0}^P \quad
\text{for} \quad \mu \in \Irr(P_{n-2}) = \Irr(\tilde{P}_{n-2}).
\]
We say that $\chi \in \Irr(P)$ is of Type~$0.\varepsilon$ if 
$\chi = {}^0\psi_\mu$ for some $\mu \in \Irr(P_{n-2})$ of
Type~$\varepsilon$. The inertia subgroup of $\lambda^\pm$ in 
$P$ is $I^\pm = L^\pm_n \ltimes U$ where $L^\pm_n \cong \GO^\pm_{n-3}(q)$
and we extend $\lambda^\pm$ trivially to $\hat{\lambda}^\pm \in \Irr(I^\pm)$. 
The irreducible characters of Type~$\pm$ are parameterized by
$\Irr(\GO^\pm_{n-3}(q)) = \Irr(L^\pm_n)$ via  
\[
{}^\pm\psi_\vartheta := (\hat{\lambda}^\pm \cdot
\infl_{L^\pm_n}^{I^\pm}(\vartheta))\smash\uparrow_{I^\pm}^P
\quad \text{for} \quad \vartheta \in \Irr(\GO^\pm_{n-3}(q)) .
\]
For $n=3$ the orbits of $\lambda^0$ and $\lambda^-$ do not exist and
there is just one irreducible character of Type~$+$ of $P_3$,
namely the one corresponding to the trivial character of 
$\GO_0^+(q) := \{1\}$. 

Suppose that $q$ is odd. The ordinary irreducible characters of the
parabolic subgroup $P^*$ of~$G^*$ were determined by An and Hiss in
\cite{AnHissSteinberg}. We choose a set of representatives 
$\{1_{U^*}, \lambda, \rho_1, \rho_2\}$ for the four orbits of 
$P^*$ on $\Irr(U^*)$ as in \cite{AnHissSteinberg}; see also 
\cite[Sections~4, 5]{AnHissUnipotent} for the particular choice of
$\rho_1$, $\rho_2$. The character~$\lambda$ is a non-trivial linear
character while $\rho_1(1) = \rho_2(1) = q^{m-1}$. Following
\cite{AnHissSteinberg}, we say that $\chi \in \Irr(P^*)$ is of
Type~$1$ if it covers~$1_{U^*}$, it is of Type~$2$ if it covers~$\lambda$, and 
it is of Type~$3$ if it covers $\rho_1$ or $\rho_2$. The irreducible
characters of Type~$1$ are obtained from $\Irr(L^*)$ via inflation,
and we set ${}^1\psi_\sigma := \infl_{L^*}^{P^*}(\sigma)$ 
for $\sigma \in \Irr(L^*)$. The inertia subgroup of~$\lambda$ in $P^*$
is $I^2 = \tilde{P}^*_{n^*-2} \ltimes U^*$ where 
$\tilde{P}^*_{n^*-2} \cong P^*_{n^*-2}$. We identify 
$\Irr(\tilde{P}^*_{n^*-2})$ with $\Irr(P^*_{n^*-2})$ via the
bijection in~\cite[Section~2]{AnHissUnipotent} and extend the linear
character $\lambda$ trivially to $\hat{\lambda} \in \Irr(I^2)$. The
irreducible characters of Type~$2$ are parameterized by
$\Irr(P^*_{n^*-2})$ via  
\[
{}^2\psi_\mu := (\hat{\lambda} \cdot
\infl_{\tilde{P}^*_{n^*-2}}^{I^2}(\mu))\smash\uparrow_{I^2}^{P^*} \quad
\text{for} \quad \mu \in \Irr(P^*_{n^*-2}) = \Irr(\tilde{P}^*_{n^*-2}).
\]
We say that $\chi \in \Irr(P^*)$ is of Type~$2.\varepsilon$ if 
$\chi = {}^2\psi_\mu$ for some $\mu \in \Irr(P^*_{n^*-2})$ of
Type~$\varepsilon$. The characters $\rho_1$ and $\rho_2$ have the same
inertia subgroup $I^3 = (L^{*'} \times Z^*) \ltimes U^*$  
where $Z^* := \langle -I_{n^*} \rangle$. For $i=1,2$ we choose an
extension $\hat{\rho_i}$ of the character $\rho_i$ to~$I^3$ as
in~\cite[2.3.3]{AnHissSteinberg}. For 
$\vartheta \in \Irr(L^{*'}) = \Irr(\Sp_{n^*-2}(q))$ we write 
$\vartheta \cdot 1_{Z^*}^+$ for the trivial extension of $\vartheta$ to
$L^{*'} \times Z^*$ and $\vartheta \cdot 1_{Z^*}^-$ for the non-trivial
extension. The irreducible characters of Type~$3$ are parameterized by  
$\Irr(\Sp_{n^*-2}(q))$ via 
\[
{}^3\psi^{i,\varepsilon}_\vartheta := (\hat{\rho}_i \cdot
\infl_{L^{*'}Z^*}^{I^3}(\vartheta \cdot
1_{Z^*}^\varepsilon))\smash\uparrow_{I^3}^{P^*} \,\,
\text{where} \,\, \vartheta \in \Irr(\Sp_{n^*-2}(q)), \, i=1,2, \, 
\varepsilon \in \{+,-\}.
\]

It is a slight abuse of notation to denote some of the inertia
subgroups and irreducible characters of~$P$ and $P^*$ by the same
symbols (the characters of Type~$1$, for example). However, we will
always make clear whether we are working with the orthogonal or the
symplectic groups, so that there should be no confusion.

\subsection{Characters of symplectic and orthogonal groups}
\label{subsec:charsSpSO}

In this section we fix notation for the ordinary unipotent
irreducible characters of the groups $\SO_{2m+1}(q)$, $\Sp_{2m}(q)$
and $\CSp_{2m}(q)$ for $m=2,3$. We also construct some non-unipotent
ordinary characters of these groups as linear combinations of
Deligne-Lusztig characters. When we speak of unipotent characters we 
always mean irreducible characters. 

Let $q$ be an arbitrary prime power. Each unipotent character of the
groups $\SO_{2m+1}(q)$, $\Sp_{2m}(q)$ and $\CSp_{2m}(q)$ is labeled by
a symbol $\Lambda$ or a triple $[\alpha,\beta,d]$ where
$(\alpha,\beta)$ is a bipartition and $d$ is the defect of $\Lambda$;
see \cite[Section~13.8]{Carter} and \cite[Section~7]{HimNoeResSO}. 
Each of the groups $\SO_5(q)$, $\Sp_4(q)$ and $\CSp_4(q)$ has six
and each of the groups $\SO_7(q)$, $\Sp_6(q)$ and $\CSp_6(q)$ has
twelve unipotent characters. Their degrees and the symbols and
bipartitions labeling these characters are given in 
Tables~\ref{tab:labelsSO5} and~\ref{tab:labelsSO7}. In these tables we
use the abbreviations $\phi_1 := q-1$, $\phi_2 := q+1$, 
$\phi_3 := q^2+q+1$, $\phi_4 := q^2+1$ and $\phi_6 :=
q^2-q+1$. Often, we identify a unipotent character with its label
$[\alpha,\beta,d]$.

The character tables of $\Sp_4(q)$ and $\CSp_4(q)$ were computed in
\cite{Enomoto}, \cite{ShinodaCSp4}, \cite{SrinivasanSp4} and
\cite{YamadaSp4}. The character table of $\CSp_6(q)$ for odd $q$ and
$\Sp_6(q)$ for even $q$ were determined in~\cite{DissLuebeck}; see
also the \CHEVIE\ library \cite{CHEVIE}.

\begin{table}[h]
 \centering
 \begin{tabular}{|c|c|c||c|c|c|}
  \hline
  Bipartition & Symbol & Degree &  Bipartition & Symbol & Degree \\
  \hline
  ${[2,-,1]}$ & $\begin{pmatrix}2\\-\end{pmatrix}$ & $1$ &
    ${[1,1,1]}$ & $\begin{pmatrix}0 \: 2\\1\end{pmatrix}$ &
     $\frac{1}{2}q\phi_2^2$\\
    ${[-,-,3]}$ & $\begin{pmatrix} 0 \: 1\: 2\\ - \end{pmatrix}$ &
     $\frac{1}{2}q\phi_1^2$ &
   ${[-,2,1]}$ & $\begin{pmatrix} 0\: 1\\2 \end{pmatrix}$ &
    $\frac{1}{2}q\phi_4$\\
   ${[1^2,-,1]}$ & $\begin{pmatrix}1\: 2\\ 0\end{pmatrix}$ & 
    $\frac{1}{2}q\phi_4$ &
  ${[-,1^2,1]}$ & $\begin{pmatrix} 0\: 1\: 2\\ 1\:2 \end{pmatrix}$ & $q^4$\\
  \hline
 \end{tabular}
 \caption{Labels and degrees of the unipotent characters of
   $\SO_5(q)$, $\Sp_4(q)$, $\CSp_4(q)$.}
 \label{tab:labelsSO5}
\end{table}

\begin{table}
 \centering
 \begin{tabular}{|c|c|c||c|c|c|}
  \hline
  Bipartition & Symbol & Degree &  Bipartition & Symbol & Degree \\
  \hline
  ${[3,-,1]}$ & $\begin{pmatrix}3\\-\end{pmatrix}$ & $1$ &
    ${[1^2,1,1]}$ & $\begin{pmatrix}1\:2\\1\end{pmatrix}$ & 
     $q^3\phi_3\phi_6$\\
  ${[2,1,1]}$ & $\begin{pmatrix}0\:3\\1\end{pmatrix}$ &
      $\frac{1}{2}q\phi_3\phi_4$ & ${[1,1^2,1]}$ &
      $\begin{pmatrix}0\:1\:3\\1\:2\end{pmatrix}$ & 
      $\frac{1}{2}q^4\phi_3\phi_4$\\ 
  ${[-,3,1]}$ & $\begin{pmatrix}0\:1\\3\end{pmatrix}$ &
      $\frac{1}{2}q\phi_4\phi_6$ & ${[-,2\,1,1]}$ &
      $\begin{pmatrix}0\:1\:2\\1\:3\end{pmatrix}$ & 
      $\frac{1}{2}q^4\phi_2^2\phi_6$\\
  ${[2\,1,-,1]}$ & $\begin{pmatrix}1\:3\\0\end{pmatrix}$ &
      $\frac{1}{2}q\phi_2^2\phi_6$ & ${[1^3,-,1]}$ &
      $\begin{pmatrix}1\:2\:3\\0\:1\end{pmatrix}$ & 
      $\frac{1}{2}q^4\phi_4\phi_6$\\
  ${[1,-,3]}$ & $\begin{pmatrix}0\:1\:3\\-\end{pmatrix}$ &
      $\frac{1}{2}q\phi_1^2\phi_3$ & ${[-,1,3]}$ &
      $\begin{pmatrix}0\:1\:2\:3\\1\end{pmatrix}$ & 
      $\frac{1}{2}q^4\phi_1^2\phi_3$\\
  ${[1,2,1]}$ & $\begin{pmatrix}0\:2\\2\end{pmatrix}$ &
      $q^2\phi_3\phi_6$ & ${[-,1^3,1]}$ &
      $\begin{pmatrix}0\:1\:2\:3\\1\:2\:3\end{pmatrix}$ & $q^9$\\
  \hline
 \end{tabular}
 \caption{Labels and degrees of the unipotent characters of
   $\SO_7(q)$, $\Sp_6(q)$, $\CSp_6(q)$.}
 \label{tab:labelsSO7}
\end{table}

In the following we construct some non-unipotent irreducible
characters of the groups $\SO_5(q)$ and $\SO_7(q)$ for odd $q$ as
linear combinations of Deligne-Lusztig characters. These linear
combinations will be used in Section~\ref{sec:decnumbers} to derive
relations between Brauer characters leading to lower bounds for
decomposition numbers. In the remaining part of this section we always
assume that $q$ is odd. 

Suppose that $m=2$ so that $\G = \SO_5(\F)$ and $\G^* = \Sp_4(\F)$. 
Since $(\G, F)$ and $(\G^*, F^*)$ are in duality and the center
of $\G$ is connected, the Lusztig series of the irreducible characters
of $G = \G^F = \SO_5(q)$ are parameterized by the $F^*$-stable
semisimple conjugacy classes of $\G^*$ or equivalently by the 
semisimple conjugacy classes of $G^* = \G^{*F^*} = \Sp_4(q)$. To
construct some of these irreducible characters we proceed along the
lines of \cite[Section~4.1 and Section~7]{DissLuebeck}. 
Recall that the action of the Frobenius map $F^*$ on
the root system $\Phi^*$ of $\G^*$ is trivial. 

For $t_1, t_2 \in \F^\times$ with $t_1 = \pm 1$ and $t_2^{q+1}=1$,
$t_2 \neq \pm 1$, we set 
\[ 
g(t_1, t_2) := h^*(t_1,t_2), \quad \Pi^* := \{\alpha_1^*\}, \quad
w_{212} := w_2 w_1 w_2, \quad w_{212}^* := \delta(w_{212})^{-1},
\]
where $\delta: \W \to \W^*$ is the isomorphism mentioned at the end of
Section~\ref{subsec:lie}. Furthermore, we write $\Psi^*$ for the
closed subset of $\Phi^*$ generated by $\Pi^*$. Using
\eqref{eq:actw*j} it is straightforward to see that 
\begin{itemize}
\item[(i)] ${}^{w_{212}^*}F^*(g(t_1,t_2)) = g(t_1,t_2)$,
\item[(ii)] $\alpha_1^*(g(t_1,t_2)) = 1$, and
\item[(iii)] $\alpha^*(g(t_1,t_2)) \neq 1$ for all 
  $\alpha^* \in \Phi^{*+} \setminus \Psi^*$.
\end{itemize}
It follows from (i)~that the element $g(t_1,t_2)$ is conjugate in
$\G^*$ to some semisimple element of $G^* = \G^{*F^*} \subseteq \G^*$. More
specifically: By the surjectivity of the Lang map there is 
$x \in \G^*$ such that $x^{-1} F^*(x) \in w_{212}^*$ and for this 
$x$ property~(i)~implies that ${}^x g(t_1,t_2) \in G^*$. The
centralizer $C_{\G^*}(g(t_1,t_2))$ is a connected reductive group and
properties (ii)~and (iii)~imply that the root system $\Psi^*$ of 
$C_{\G^*}(g(t_1,t_2))$ with respect to the maximal torus $\T^*$ has type
$\tilde{A}_1$ and that the Weyl group $\W_C^*$ of $C_{\G^*}(g(t_1,t_2))$ is
generated by $w^*_1$. 

Since ${}^x g(t_1,t_2)$ is $F^*$-stable its centralizer 
$C_{\G^*}({}^x g(t_1,t_2))$ is also $F^*$-stable. This induces an
action of $F^*$ on $C_{\G^*}(g(t_1,t_2))$,
on~$\Psi^*$ and on $\W_C^*$ via the isomorphism 
$C_{\G^*}(g(t_1,t_2)) \to C_{\G^*}({}^x g(t_1,t_2))$, 
$y \mapsto {}^xy$. The action of $F^*$ on
$\Psi^*$ and on~$\W_C^*$ is trivial and the Dynkin type
of $C_{\G^*}({}^x g(t_1,t_2))^{F^*}$ is $\tilde{A}_1$. The 
$F^*$-conjugacy classes coincide with the conjugacy
classes of $\W_C^*$ and are given by: $C_1 = \{1\}$, $C_2 = \{w^*_1\}$.
We apply \cite[Lemma~7.1]{DissLuebeck} to construct the irreducible
characters of $\SO_5(q)$ in the Lusztig series labeled by $g(t_1,t_2)$. 
The group $\W_C^*$ is isomorphic to the symmetric group $S_2$ and
its character table is 
\begin{center}
\begin{tabular}{|c|rr|}
\hline
& $C_1$ & $C_2$ \\\hline
$\phi_1$ & $1$ & $1$ \\
$\phi_2$ & $1$ & $-1$\\\hline
\end{tabular}
\end{center}
We use the notation from~\cite[Lemma~7.1]{DissLuebeck}. Since the
action of the Frobenius map $F^*$ on~$\W_C^*$ is trivial we have 
$\hat{\W}_C^{*F^*} = \Irr(\W_C^*)$ and $\alpha_\phi = 1$ for all 
$\phi \in \Irr(\W_C^*)$. For $w^* \in \W_C^*$ we write~$R_{w^*}$
for the Deligne-Lusztig character of $C_{G^*}(g(t_1,t_2))^{F^*}$
corresponding to the trivial character of a maximal torus obtained
from $\T^*$ by twisting with $w^*$. By~\cite[Lemma~7.1]{DissLuebeck} the
unipotent irreducible characters of $C_{G^*}(g(t_1,t_2))^{F^*}$ are
given by 
$R_{\phi_1} = \varepsilon_1 \frac12 ( R_1 + R_{w^*_1} )$ and 
$R_{\phi_2} = \varepsilon_2 \frac12 ( R_1 - R_{w^*_1} )$,
where $\varepsilon_1$, $\varepsilon_2$ are complex roots of
unity. Applying the Jordan decomposition of characters we obtain 
the irreducible characters 
\begin{align} \label{eq:irr12SO5}
\begin{split}
\chi_1(t_1,t_2) &= \frac{\varepsilon'_1}{2} ( R_{w_{212}}(t_1,t_2) + 
R_{w_{212} w_1}(t_1,t_2) ), \\
\chi_2(t_1,t_2) &= \frac{\varepsilon'_2}{2} ( R_{w_{212}}(t_1,t_2) - 
R_{w_{212} w_1}(t_1,t_2) ),
\end{split}
\end{align}
of $G = \SO_5(q)$, where $\varepsilon'_1$, $\varepsilon'_2$ are
complex roots of unity. In~\eqref{eq:irr12SO5} we write $R_{w}(t_1,t_2)$
for the Deligne-Lusztig character of $G$ which is labeled by the
$F$-stable maximal torus $\T_{w}$ of $\G$ which is obtained
from $\T$ by twisting with $w \in \W$ and the linear character of
$\T_{w}^F$ corresponding to a fixed conjugate of $g(t_1,t_2)$ in
$\T_{\delta(w)^{-1}}^{*F^*}$. Since $R_{w}(t_1,t_2)(1) = R_{w}(1)(1)$ 
and since we can easily evaluate the Deligne-Lusztig characters
$R_{w}(1)$ at the identity element $1$ with \CHEVIE\ we get
$\varepsilon_1' = \varepsilon_2' = -1$ and the degrees are 
$\chi_1(t_1,t_2)(1) = 1 \cdot (q-1)(q^2+1)$, 
$\chi_2(t_1,t_2)(1) = q \cdot (q-1)(q^2+1)$.

For $t_1, t_2 \in \F^\times$ with $t_1^{q+1} = t_2^{q+1} = 1$,
$t_1,t_2 \neq \pm 1$ and $t_2 \neq t_1^{\pm1}$, we set 
\[ 
g_\reg(t_1, t_2) := h^*(t_1,t_2), \quad w_0 := w_1 w_2 w_1 w_2, \quad
w^*_0 := \delta(w_0)^{-1} = w^*_2 w^*_1 w^*_2 w^*_1,
\]
so that $w_0$ and $w^*_0$ are the longest elements of $\W$, $\W^*$,
respectively. Using \eqref{eq:actw*j} it is straightforward to see that 
${}^{w^*_0}F^*(g_\reg(t_1,t_2)) = g_\reg(t_1,t_2)$ and
$\alpha^*(g_\reg(t_1,t_2)) \neq 1$ for all $\alpha^* \in \Phi^{*+}$.
Thus, $g_\reg(t_1,t_2)$ is conjugate in $\G^*$ to some regular
semisimple element in $\T_{w^*_0}^{*F^*}$ where $\T_{w^*_0}$ is an
$F^*$-stable maximal torus of $\G^*$ which is obtained from $\T^*$ by
twisting with $w^*_0$. As described in~\cite[Section~3]{DissLuebeck},
the element $g_\reg(t_1,t_2)$ corresponds to a linear character of
$\T_{w_0}^F$ in general position. Hence, the corresponding
Deligne-Lusztig character $R_{w_0}(t_1,t_2)$ of $G=\G^F$ is
irreducible up to sign. From \CHEVIE\ we get that
$R_{w_0}(t_1,t_2)(1) = (q-1)^2(q^2+1)$, thus
\begin{equation} \label{eq:irrregSO5}
\chi_3(t_1,t_2) := R_{w_0}(t_1,t_2) \in \Irr(G).
\end{equation}

Now suppose that $m=3$ so that $\G = \SO_7(\F)$ and $\G^* = \Sp_6(\F)$. 
We still assume that $q$ is odd. Similar to the case $m=2$ we use
the methods from~\cite[Section~4.1 and Section~7]{DissLuebeck} to
construct irreducible characters of $G = \G^F = \SO_7(q)$.
We define elements of the Weyl group $\W$ by:
\[
w_9 := w_1 w_2 w_1 w_3 w_2 w_1, \,\,
w_{13} := w_2 w_1 w_3 w_2 w_1 w_3 w_2, \,\,
w_{23} := w_2 w_1 w_3 w_2 w_1 w_3 w_2 w_3.
\]
Furthermore, let $w_{32}$ be the longest element of $\W$ and 
set $w_j^* := \delta(w_j)^{-1} \in \W^*$ for $j=9,13,23,32$.
Additionally, we define semisimple elements of $\G^*$ by
\begin{enumerate}
\item[1)] $g_9(t) := h^*(t,t,t)$ \,\, for $t \in \F^\times$ with $t^{q+1}=1$,
  $t \neq \pm 1$, 

\item[2)] $g_{13}(t_1,t_2) := h^*(t_1,t_2,t_2)$ \,\, for $t_1, t_2 \in \F^\times$
  with $t_1 = \pm 1$, $t_2^{q+1}=1$, $t_2 \neq \pm 1$,

\item[3)] $g_{23}(t_1,t_2,t_3) := h^*(t_1,t_2,t_3)$ \,\, for $t_1, t_2, t_3 \in \F^\times$
  with $t_1 = \pm 1$, $t_2^{q+1}=t_3^{q+1}=1$, $t_2 \neq \pm 1$, 
  $t_3 \neq \pm 1, t_2^{\pm 1}$,

\item[4)] $g_{32}(t_1,t_2,t_3) := h^*(t_1,t_2,t_3)$ \,\, for $t_1, t_2, t_3 \in \F^\times$
  with $t_1^{q+1}=t_2^{q+1}=t_3^{q+1}=1$, $t_i \neq t_j^{\pm1}$ for
  all $i,j=1,2,3$,
\end{enumerate}
where the notation is motivated by the analogy with~\cite[Table~17]{DissLuebeck}. 
For each element $g_9(t)$, where $t$ satisfies the conditions in 1),
we construct characters $\chi_{9,1}(t), \chi_{9,2}(t), \chi_{9,3}(t)
\in \Irr(G)$ which can be written as rational linear combinations of
Deligne-Lusztig characters as follows:
\begin{eqnarray} \label{eq:chi9_SO7}
\chi_{9,1}(t) \hspace{-0.2cm} & = & \hspace{-0.2cm} \frac{1}{6} ( 
  R_{w_9 w_2 w_3 w_2} (t) + 3 R_{w_9}(t) + 2 R_{w_9 w_2}(t) ), \nonumber\\ 
\chi_{9,2}(t) \hspace{-0.2cm} & = & \hspace{-0.2cm}
\frac{1}{6} ( 2 R_{w_9 w_2 w_3 w_2}(t) - 2 R_{w_9 w_2}(t) ),\\ 
\chi_{9,3}(t) \hspace{-0.2cm} & = & \hspace{-0.2cm}
-\frac{1}{6} ( R_{w_9 w_2 w_3 w_2}(t) - 3 R_{w_9}(t) + 2 R_{w_9 w_2}(t) ). \nonumber
\end{eqnarray}
By $R_{w}(t)$ we mean the Deligne-Lusztig character of the group 
$G = \SO_7(q)$ which is labeled by the $F$-stable maximal torus $\T_{w}$
of $\G$ which is obtained from $\T$ by twisting with $w \in \W$ and
the linear character of $\T_{w}^F$ corresponding to a fixed conjugate
of $g_9(t)$ in $\T_{\delta(w)^{-1}}^{*F^*}$. 
The degrees of $\chi_{9,1}(t)$, $\chi_{9,2}(t)$, $\chi_{9,3}(t)$ are
$\phi_1^2\phi_3\phi_4$, $q\phi_1^3\phi_3\phi_4$, $q^3\phi_1^2\phi_3\phi_4$,
respectively. 

Similarly, for each element $g_{13}(t_1,t_2)$, where $t_1$, $t_2$
satisfy the conditions in~2), there are characters
$\chi_{13,1}(t_1,t_2), \chi_{13,2}(t_1,t_2), \chi_{13,3}(t_1,t_2),
\chi_{13,4}(t_1,t_2) \in \Irr(G)$ which can be written as 
linear combinations of Deligne-Lusztig characters as follows: 
\begin{align} \label{eq:chi13_SO7}
\begin{split}
\chi_{13,1}( \hspace{-0.05cm} t_1, t_2 \hspace{-0.05cm} ) &= 
-\frac{1}{4} ( R_{w_{13}}( \hspace{-0.05cm} t_1, t_2 \hspace{-0.05cm}
) \hspace{-0.05cm} + \hspace{-0.05cm}  R_{w_{13}
  w_1}( \hspace{-0.05cm} t_1, t_2 \hspace{-0.05cm} ) \hspace{-0.05cm}
+ \hspace{-0.05cm}  R_{w_{13} w_3}( \hspace{-0.05cm} t_1,
t_2 \hspace{-0.05cm} )  
\hspace{-0.05cm} + \hspace{-0.05cm}  R_{w_{13} w_1 w_3}( \hspace{-0.05cm} t_1, t_2 \hspace{-0.05cm} ) ),\\
\chi_{13,2}( \hspace{-0.05cm} t_1, t_2 \hspace{-0.05cm}
) &=  
-\frac{1}{4} ( R_{w_{13}}( \hspace{-0.05cm} t_1, t_2 \hspace{-0.05cm}
) \hspace{-0.05cm} + \hspace{-0.05cm}  R_{w_{13}
  w_1}( \hspace{-0.05cm} t_1, t_2 \hspace{-0.05cm} ) \hspace{-0.05cm}
- \hspace{-0.05cm} R_{w_{13} w_3}( \hspace{-0.05cm} t_1,
t_2 \hspace{-0.05cm} ) \hspace{-0.05cm} - \hspace{-0.05cm} R_{w_{13}
  w_1 w_3}( \hspace{-0.05cm} t_1, t_2 \hspace{-0.05cm} ) ),\\ 
\chi_{13,3}( \hspace{-0.05cm} t_1, t_2 \hspace{-0.05cm} ) &= 
-\frac{1}{4} ( R_{w_{13}}( \hspace{-0.05cm} t_1, t_2 \hspace{-0.05cm}
) \hspace{-0.05cm} - \hspace{-0.05cm} R_{w_{13} w_1}( \hspace{-0.05cm}
t_1, t_2 \hspace{-0.05cm} ) \hspace{-0.05cm} + \hspace{-0.05cm}
R_{w_{13} w_3}( \hspace{-0.05cm} t_1, t_2 \hspace{-0.05cm} )  
\hspace{-0.05cm}- \hspace{-0.05cm}R_{w_{13} w_1 w_3}( \hspace{-0.05cm} t_1, t_2 \hspace{-0.05cm} ) ),\\
\chi_{13,4}( \hspace{-0.05cm} t_1, t_2 \hspace{-0.05cm} ) &= 
-\frac{1}{4} ( R_{w_{13}}( \hspace{-0.05cm} t_1, t_2 \hspace{-0.05cm}
) \hspace{-0.05cm} - \hspace{-0.05cm} R_{w_{13} w_1}( \hspace{-0.05cm}
t_1, t_2 \hspace{-0.05cm} ) \hspace{-0.05cm} - \hspace{-0.05cm}
R_{w_{13} w_3}( \hspace{-0.05cm} t_1, t_2 \hspace{-0.05cm} )  
\hspace{-0.05cm} + \hspace{-0.05cm}  R_{w_{13} w_1 w_3}( \hspace{-0.05cm} t_1, t_2 \hspace{-0.05cm} ) ).
\end{split}
\end{align}
The degrees of the characters $\chi_{13,1}(t_1,t_2)$, $\chi_{13,2}(t_1,t_2)$,
$\chi_{13,3}(t_1,t_2)$, $\chi_{13,4}(t_1,t_2)$ are
$\phi_1\phi_3\phi_4\phi_6$, $q\phi_1\phi_3\phi_4\phi_6$, 
$q\phi_1\phi_3\phi_4\phi_6$, $q^2\phi_1\phi_3\phi_4\phi_6$, respectively.

Furthermore, for each element $g_{23}(t_1,t_2,t_3)$, where $t_1$, $t_2$,
$t_3$ satisfy the conditions in~3), there are characters
$\chi_{23,1}(t_1,t_2,t_3), \chi_{23,2}(t_1,t_2,t_3) \in \Irr(G)$ which
can be written as rational linear combinations of Deligne-Lusztig
characters as follows: 
\begin{align} \label{eq:chi23_SO7}
\begin{split}
\chi_{23,1}(t_1,t_2,t_3) &= \frac{1}{2} ( R_{w_{23}}(t_1,t_2,t_3) + 
R_{w_{23} w_1}(t_1,t_2,t_3) ),\\
\chi_{23,2}(t_1,t_2,t_3) &= \frac{1}{2} ( R_{w_{23}}(t_1,t_2,t_3) -
R_{w_{23} w_1}(t_1,t_2,t_3) ).
\end{split}
\end{align}
The degrees of $\chi_{23,1}(t_1,t_2,t_3)$,
$\chi_{23,2}(t_1,t_2,t_3)$ are
$\phi_1^2\phi_3\phi_4\phi_6$, $q\phi_1^2\phi_3\phi_4\phi_6$,
respectively. 

Finally, for each element $g_{32}(t_1,t_2,t_3)$, where $t_1$, $t_2$,
$t_3$ satisfy the conditions in~4), the character
\begin{equation} \label{eq:chi32_SO7}
\chi_{32}(t_1,t_2,t_3) := -R_{w_{32}}(t_1,t_2,t_3)
\end{equation}
is an irreducible character of $G$ of degree $\phi_1^3\phi_3\phi_4\phi_6$, where
$R_{w_{32}}(t_1,t_2,t_3)$ is the Deligne-Lusztig character
corresponding to the maximal torus obtained from $\T$ by twisting with
the longest element $w_{32}$ and a linear character in general position. 

We only sketch the construction of $\chi_{9,1}(t)$, $\chi_{9,2}(t)$ and
$\chi_{9,3}(t)$. The construction of the remaining characters is
similar and easier. Let $t \in \F^\times$ with $t^{q+1}=1$, 
$t \neq \pm 1$. We set $\Pi^*_9 := \{\alpha^*_2, \alpha^*_3\}$
and $w^*_9 := \delta(w_9)^{-1} = w_1^* w_2^* w_3^* w_1^* w_2^* w_1^*$
and we write $\Psi^*_9$ for the closed subset of $\Phi^*$ generated by
$\Pi^*_9$. Using \eqref{eq:actw*j} it is easy to see that
\begin{itemize}
\item[(i)] ${}^{w^*_9}F^*(g_9(t)) = g_9(t)$,
\item[(ii)] $\alpha^*(g_9(t)) = 1$ for all $\alpha^* \in \Pi^*_9$, and
\item[(iii)] $\alpha^*(g_9(t)) \neq 1$ for all $\alpha^* \in \Phi^{*+} \setminus \Psi^*_9$.
\end{itemize}
As for $m=2$, it follows from (i)~that the element $g_9(t)$ is
conjugate in $\G^*$ to some element of 
$G^* = \G^{*F^*} \subseteq \G^*$. More specifically: By the
surjectivity of the Lang map there is $x \in \G^*$ such that 
$x^{-1} F^*(x) \in w^*_9$ and for this $x$ property~(i)~implies that 
${}^x g_9(t) \in G^*$. The centralizer $C_{\G^*}(g_9(t))$
is a connected reductive group and (ii), (iii)~imply
that the root system~$\Psi^*_9$ of $C_{\G^*}(g_9(t))$ with respect 
to $\T^*$ has type~$A_2$ and that the Weyl group
$\W^*_9$ of $C_{\G^*}(g_9(t))$ is generated by $\{w^*_2, w^*_3\}$. 
The action of $F^*$ on $\Psi^*_9$ is given by 
$\alpha_2^* \mapsto \alpha_3^*$ and $\alpha_3^* \mapsto
\alpha_2^*$ and the action of $F^*$ on~$\W^*_9$ is given by
$w^*_2 \mapsto w^*_3$ and $w^*_3 \mapsto w^*_2$. The Dynkin
type of $C_{\G^*}({}^x g_9(t))^{F^*}$ is ${{}^2A}_2$ and the
$F^*$-conjugacy classes of $\W^*_9$ are: 
\[
\bar{C}_1 := \{1, w^*_2 w^*_3, w^*_3 w^*_2\}, \quad \bar{C}_2 := \{w^*_2 w^*_3
w^*_2\} \quad \text{and} \quad \bar{C}_3 := \{w^*_2, w^*_3\}.
\]
The group $\W^*_9$ is isomorphic to the symmetric group $S_3$ and its
conjugacy classes are $C_1 = \{1\}$, $C_2 = \{w^*_2, w^*_3, w^*_2 w^*_3 w^*_2\}$
and $C_3 = \{w^*_2 w^*_3, w^*_3 w^*_2\}$. The character table of $\W^*_9$ is
\begin{center}
\begin{tabular}{|c|rrr|}
\hline
& $C_1$ & $C_2$ & $C_3$\\\hline
$\phi_1$ & $1$ & $1$ & $1$\\
$\phi_2$ & $2$ & $0$ & $-1$\\
$\phi_3$ & $1$ & $-1$ & $1$\\\hline
\end{tabular}
\end{center}
The Frobenius map $F^*$ acts on $\W^*_9$ in the same way as the inner
automorphism induced by the element $w^*_0 := w^*_2 w^*_3 w^*_2$. Thus, in the
notation of \cite[Lemma~7.1]{DissLuebeck} we have 
$\hat{\W}_9^{*F^*} = \Irr(\W^*_9)$ and $\alpha_\phi = w^*_0$ for all 
$\phi \in \Irr(\W_9^*)$. For $w^* \in \W^*_9$ we write~$R_{w^*}$ for the
Deligne-Lusztig character of $C_{G^*}(g_9(t))^{F^*}$ corresponding to
the trivial character of a maximal torus obtained from $\T^*$ by
twisting with $w^*$. By~\cite[Lemma~7.1]{DissLuebeck} the unipotent
irreducible characters of $C_{G^*}(g_9(t))^{F^*}$ are 
\begin{eqnarray*}
R_{\phi_1} & = & \varepsilon_1 \frac16 ( R_{w^*_0} +3 R_1 + 2
R_{w^*_2} ), \quad
R_{\phi_2} = \varepsilon_2 \frac16 ( 2 R_{w^*_0} - 2 R_{w^*_2} ),\\
R_{\phi_3} & = & \varepsilon_3 \frac16 ( R_{w^*_0} - 3 R_1 + 2 R_{w^*_2} ),
\end{eqnarray*}
where $\varepsilon_1$, $\varepsilon_2$, $\varepsilon_3 \in \C$ are 
roots of unity. Applying the Jordan decomposition of characters we
get:
\begin{eqnarray*}
\chi_{9,1}(t) \hspace{-0.2cm} & = & \hspace{-0.2cm} \frac{\varepsilon'_1}{6} ( 
  R_{w_9 w_2 w_3 w_2} (t) + 3 R_{w_9}(t) + 2 R_{w_9 w_2}(t) ),\\ 
\chi_{9,2}(t) \hspace{-0.2cm} & = & \hspace{-0.2cm}
\frac{\varepsilon'_2}{6} ( 2 R_{w_9 w_2 w_3 w_2}(t) - 2 R_{w_9 w_2}(t) ),\\ 
\chi_{9,3}(t) \hspace{-0.2cm} & = & \hspace{-0.2cm}
\frac{\varepsilon'_3}{6} ( R_{w_9 w_2 w_3 w_2}(t) - 3 R_{w_9}(t) + 2 R_{w_9 w_2}(t) ),
\end{eqnarray*}
where $\varepsilon'_1$, $\varepsilon'_2$, $\varepsilon'_3 \in \C$ are 
roots of unity. Since $R_{w}(t)(1) = R_{w}(1)(1)$ and since we can
easily evaluate the Deligne-Lusztig characters $R_{w}(1)$ at the
identity element~$1$ with \CHEVIE\ we get 
$\varepsilon_1' = \varepsilon_2' = 1$, $\varepsilon_3' = -1$
and also ${\chi}_{9,j}(t)(1)$ for $j=1,2,3$.

\section{Decomposition numbers}\label{sec:decnumbers}

Let $q$ be an arbitrary prime power and $\ell$ an odd prime not
dividing $q$, but dividing the group orders 
$|G|=|G^*| = q^9(q^6-1)(q^4-1)(q^2-1)$. This section is devoted to the
proof of the $\ell$-modular decomposition numbers of the unipotent
characters for both $G=\SO_7(q)$ and $G^*=\Sp_6(q)$. 
Let $(K,\Ocal,k)$ be an $\ell$-modular splitting system for $G$, $G^*$
and all their subgroups. 

If $\ell \nmid q\pm 1$ then all unipotent blocks of $G$ and $G^*$ are
cyclic, and we may refer to \cite{FongSrinivasanTrees} for odd $q$,
and to \cite{WhiteSp6even} for even $q$ to obtain their decomposition
numbers. Note that Remark~\ref{rmk:even_iso_Sp} gives an explicit
isomorphism $\SO_7(q) \to \Sp_6(q)$ for even $q$. Therefore we only
need to consider odd primes $\ell$ dividing $q+1$ or $q-1$.
The case of $\ell \mid q-1$ is readily solved with the help of 
theorems by Puig, Gruber and Hiss, see Remark~\ref{rem:q-1}.
Hence, we get new results only in the case of $\ell \mid q+1$. This
case is the topic of this entire section.

\begin{remark}\label{rem:q-1}
The decomposition numbers for $\ell \mid q-1$ may be computed as follows:
By~\cite{FongSrinivasanClass} for odd $q$ (see 
also~\cite[Sections~2 and 6]{HissKessar}) and~\cite{WhiteSp6even} for
even $q$ the distribution of the ordinary unipotent characters of $G$
and $G^*$ into blocks coincides with the distribution into
Harish-Chandra series. More precisely: There are two unipotent
blocks: the principal block containing the unipotent characters in the
principal series, i.e.\ those whose symbol has defect~$1$, and one
block containing the unipotent characters ${[1,-,3]}$ and ${[-,1,3]}$
whose symbols have defect $3$. 

If $\ell > 3$ then $\ell$ does not divide the order of the Weyl group
and the decomposition numbers of $G$ and $G^*$ follow from a result 
by Puig~\cite{Puig}. If $\ell=3$ then we may employ
\cite[Theorem~4.13]{GeckHiss} (see also \cite{GruberHiss}) to
immediately infer the decomposition matrices of both blocks with the
help of the decomposition matrices of the general linear groups, see
\cite{JamesGL}. In fact, the decomposition numbers of the unipotent
characters of $G$ and $G^*$ for $\ell \mid q-1$ coincide and they
do not depend on whether $q$ is odd or even; hence they can be read
off from the data given in~\cite[Theorem~2.1]{WhiteSp6even}.
\end{remark}

\smallskip

\begin{hypothesis}
From now on until the end of this paper, we assume that $\ell$ is an
odd prime dividing $q+1$. 
\end{hypothesis}

\smallskip

The following theorem is the main result of this paper. We write 
$(q+1)_\ell$ for the largest power of $\ell$ dividing $q+1$.

\begin{theorem} \label{thm:decnumbers}
For all prime powers $q$ the $\ell$-modular decomposition numbers of
the unipotent characters in the principal block of $G=\SO_7(q)$ and
$G^*=\Sp_6(q)$ are given in Table~\ref{tab:decmatSO7Sp6}, where the
decomposition numbers $\alpha, \beta, \gamma$ are as follows:
  \begin{itemize}
    \item[(a)] If $(q+1)_\ell = 3$ then $\alpha =\beta=\gamma =1$.
    \item[(b)] If $(q+1)_\ell = 5$ then $\alpha = \beta = \gamma = 2$.
    \item[(c)] If $(q+1)_\ell > 5$ 
     then $\alpha =  \gamma = 2$ and $\beta = 3$.
  \end{itemize}
\end{theorem}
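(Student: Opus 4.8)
The plan is to compute the decomposition matrix of the principal $\ell$-block (for $\ell\mid q+1$) of $G=\SO_7(q)$ by a combination of lower and upper bounds on the individual decomposition numbers, using the characters constructed in Section~\ref{sec:chars}, and then transfer the result to $G^*=\Sp_6(q)$. First I would fix a unitriangular basic set (the ordinary unipotent characters in the principal block), so that the decomposition matrix is square and lower-unitriangular with respect to a suitable ordering compatible with degrees; the known approximation of the $\Sp_6(q)$ decomposition matrix from \cite{DissKoehler,AnHissSteinberg,WhiteSp6even} already pins down most entries, leaving only the two parameters attached to the Steinberg character (and the entries $\alpha,\beta,\gamma$) to be determined. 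For the orthogonal side, the analogous scaffolding must first be erected: I would use the decomposition numbers of $\SO_5(q)$ (obtained as an intermediate step, via the same techniques on the smaller rank), Harish-Chandra induction from the Levi $L'_n\cong\SO_5(q)$ of projective indecomposable modules, and induction of PIMs from the cyclic blocks of the parabolic subgroup $P$ constructed in Section~\ref{subsec:parabolicsubgrps}, to produce enough projective characters to bound the unknown entries from above.

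The lower bounds come from the Brauer character relations: each non-unipotent irreducible character $\chi_{9,j}(t)$, $\chi_{13,j}(t_1,t_2)$, $\chi_{23,j}(t_1,t_2,t_3)$, $\chi_{32}(t_1,t_2,t_3)$ of $\SO_7(q)$ (and their $\SO_5(q)$ analogues $\chi_1,\chi_2,\chi_3$) restricts, on $\ell$-regular classes, to a $\Z$-combination of unipotent Brauer characters, because for $\ell\mid q+1$ the semisimple parts of the defining tori become trivial on $\ell$-regular elements and the torus characters $g_9(t)$, $g_{13}(t_1,t_2)$, etc., collapse. Reducing these Deligne–Lusztig combinations modulo $\ell$ and expressing the result in the basic set yields inequalities $d_{\chi,\varphi}\ge$ (explicit values), in particular forcing $\alpha,\beta,\gamma\ge 1$ always, $\ge 2$ when $(q+1)_\ell\ge 5$, and $\beta\ge 3$ when $(q+1)_\ell>5$ — the dependence on $(q+1)_\ell$ entering through how many times $\ell$ divides the relevant cyclotomic-polynomial factors $\phi_2=q+1$ appearing in the character degrees, which governs the multiplicity with which a given PIM can appear. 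Matching these lower bounds against the upper bounds from the induced projectives should close the gap for $\alpha$ and $\gamma$ and for all the non-Steinberg entries outright.

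The hard part will be the two parameters sitting in the column of the Steinberg character (equivalently the PIM of the trivial module), which the parabolic-induction and Harish-Chandra-induction arguments do not reach sharply; here I expect to invoke Dudas's method \cite{Dudas} based on the cohomology of Deligne–Lusztig varieties and their associated complexes of virtual projective modules, reading off the missing entry (and thereby the precise value of $\beta$, namely $\beta=3$ exactly when $(q+1)_\ell>5$ and $\beta=2$ when $(q+1)_\ell=5$) from the self-duality and the degrees of the projective modules occurring in the complex. Once the full decomposition matrix of $\SO_7(q)$ is established, I would observe that every ingredient used — basic sets, the bounds, and the combinatorics of symbols — has a mirror image on the symplectic side (and for even $q$ one simply invokes the isomorphism of Remark~\ref{rmk:even_iso_Sp}), so that the $\Sp_6(q)$ matrix coincides with the $\SO_7(q)$ matrix under the stated identification of rows and columns; the remaining consistency check is that the resulting matrix agrees with the earlier partial computations for $\Sp_6(q)$ and confirms \cite[Conjecture~3.4]{GeckHiss} in these cases, which is a direct inspection of Table~\ref{tab:decmatSO7Sp6}.
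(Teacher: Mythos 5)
Your proposal follows essentially the same route as the paper: compute the decomposition matrix of $\SO_5(q)$ as an intermediate step, build an approximation via Harish-Chandra induction of PIMs from Levi subgroups together with induction of PIMs from the cyclic blocks of the parabolic $P^{(*)}$, get lower bounds from the $\ell$-regular restrictions of the Deligne--Lusztig combinations of Section~\ref{sec:chars}, and invoke Dudas's method to close the final gap. One correction on the division of labor: the paper uses Dudas's method (Lemma~\ref{la:decnosgeneric}, giving $\gamma\le 2$ and $\beta\le\gamma+1$) \emph{only} when $(q+1)_\ell>5$; for $(q+1)_\ell\in\{3,5\}$ the upper bound $\beta,\gamma\le\tfrac{1}{2}\bigl((q+1)_\ell-1\bigr)$ supplied by the exceptional multiplicity in the Brauer tree of the cyclic block of $P^{(*)}$ (Lemmas~\ref{la:cyclicblocks}, \ref{la:indproj}, \ref{la:approxdecmat}(b)) is already tight against the lower bounds, so Dudas's method contributes nothing there, contrary to what you suggest when writing that it gives $\beta=2$ when $(q+1)_\ell=5$. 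Also note that $\beta,\gamma$ are entries in the \emph{row} of the Steinberg character $[-,1^3,1]$ and the \emph{columns} of the two cuspidal Brauer characters $\varphi_8,\varphi_9$, not in a ``column of the Steinberg character'' and not in the PIM of the trivial module; Dudas's bound is obtained by pairing the cuspidal modular Steinberg $\varphi_{10}$ against suitable $\R{\G}{\T_w}(1)$ and exploiting that the projective cover of $\varphi_{10}$ occurs only at one end of the associated perfect complex, rather than from self-duality or degree considerations.
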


\begin{table}[h]
 \[
 \begin{array}{|c|cccccccccc|}
  \hline
  & \varphi_1 & \varphi_2 & \varphi_3 & \varphi_4 & \varphi_5 &
  \varphi_6 & \varphi_7 & \varphi_8 & \varphi_9 & \varphi_{10} \\
  & ps & ps & A_1 & [B_2,\eta] & A_1 \times A_1' & A_1' & [B_2,\St] &
  c & c & c \\\hline
  {[3,-,1]} & 1 & \multicolumn{1}{:c}{.} & . & . & . & . & . & . & . & . \\\cdashline{1-1}\cdashline{2-5}
  {[2,1,1]} & 1 & \multicolumn{1}{:c}{1} & . & \multicolumn{1}{c:}{.} & . & . & . & . & . & . \\
  {[-,3,1]} & 1 & \multicolumn{1}{:c}{.} & 1 & \multicolumn{1}{c:}{.} & . & . & . & . & . & . \\
  {[1,-,3]} & . & \multicolumn{1}{:c}{.} & . & \multicolumn{1}{c:}{1}
  & . & . & . & . & . & . \\\cdashline{1-1}\cdashline{3-6} 
  {[1,2,1]} & 1 & 1 & 1 & \multicolumn{1}{c:}{.} & \multicolumn{1}{c:}{1} & . & . & . & . &
  . \\\cdashline{1-1}\cdashline{6-7} 
  {[1^2,1,1]} & 1 & 1 & . & . & \multicolumn{1}{c:}{1} & 1 &
  \multicolumn{1}{:c}{.} & . & . & . \\\cdashline{1-1}\cdashline{7-10} 
  {[1,1^2,1]} & 1 & 1 & 1 & \alpha & 1 & 1 & \multicolumn{1}{:c}{1} &
  . & \multicolumn{1}{c:}{.} & .\\ 
  {[1^3,-,1]} & 1 & . & . & . & . & 1 & \multicolumn{1}{:c}{.} & 1 &
  \multicolumn{1}{c:}{.} & .\\ 
  {[-,1,3]} & . & . & . & 1 & . & . & \multicolumn{1}{:c}{.} & . &
  \multicolumn{1}{c:}{1} & .\\\cdashline{1-1}\cdashline{8-11} 
  {[-,1^3,1]} & 1 & . & 1 & \alpha & 1 & 1 & 1 & \beta & \multicolumn{1}{c:}{\gamma} & 1\\
  \hline
 \end{array}
 \]
 \caption{The $\ell$-modular decomposition numbers of the unipotent
   characters in the principal blocks of $\SO_7(q)$ and $\Sp_6(q)$.}
 \label{tab:decmatSO7Sp6}
\end{table}

\begin{remark} \label{rmk:decnosSO7}
\begin{enumerate}
\item[(a)] For $(q+1)_\ell>5$ the decomposition numbers in
  Table~\ref{tab:decmatSO7Sp6} and the statement in part (c) of 
  Theorem~\ref{thm:decnumbers} were obtained independently by Olivier
  Dudas and Gunter Malle (private communication).

\item[(b)] For $G^*=\Sp_6(q)$ the decomposition numbers in
  Table~\ref{tab:decmatSO7Sp6} were already computed by An,
  Hiss~\cite{AnHissSteinberg} and K\"ohler~\cite{DissKoehler} for odd
  $q$ and White~\cite{WhiteSp6even} for even $q$ except for the
  entries $\beta$ and $\gamma$, for which they prove lower and upper 
  bounds.

\item[(c)] By~\cite{FongSrinivasanClass} and~\cite{WhiteSp6even} the
  characters ${[21,-,1]}$ and ${[-,21,1]}$ are contained in a cyclic
  block and do not belong to the principal block; see
  \cite{FongSrinivasanTrees}, \cite{WhiteSp6even} for the Brauer tree
  of this cyclic block.

\item[(d)] For each of the groups $G=\SO_7(q)$ and $G^*=\Sp_6(q)$
  there are ten irreducible Brauer characters $\varphi_1$,
  $\varphi_2$, \dots, $\varphi_{10}$ in the principal block. The
  second row of Table~\ref{tab:decmatSO7Sp6} lists for each   
  $\varphi_j$ the modular Harish-Chandra series containing~$\varphi_j$;  
  see~\cite[Section~2]{GeckHiss} for a definition of
  modular Harish-Chandra series. 

  We write $ps$ for the principal series and $c$ for cuspidal Brauer
  characters. The standard Levi subgroups corresponding to the sets
  $\{\alpha_1^{(*)}\}$, $\{\alpha_2^{(*)}\}$ and 
  $\{\alpha_1^{(*)}, \alpha_3^{(*)}\}$ of simple roots each have
  a unique cuspidal unipotent Brauer character, namely the modular
  Steinberg character. We denote the corresponding modular
  Harish-Chandra series of $G$ and $G^*$ by~$A_1$, $A_1'$ and $A_1 \times A_1'$. 

  Furthermore, we will see in Section~\ref{subsec:decSO5} that the
  Levi subgroup $L^{(*)}$ has exactly two cuspidal unipotent Brauer
  characters: the restriction of the cuspidal ordinary unipotent
  character to the $\ell$-regular elements and the modular
  Steinberg character. We write $[B_2,\eta]$ and $[B_2,\St]$ for the
  corresponding modular Harish-Chandra series of $G$ and $G^*$.

\item[(e)] For each of the groups $G=\SO_7(q)$ and $G^*=\Sp_6(q)$ the
  set of ordinary unipotent characters is partitioned into six
  families; see the information available in the \GAP-part of
  \CHEVIE~\cite{GAP4}, \cite{CHEVIE}. The distribution into families
  is indicated by the dotted lines in the leftmost column of 
  Table~\ref{tab:decmatSO7Sp6}.

\item[(f)] Theorem~\ref{thm:decnumbers}
  confirms~\cite[Conjecture~3.4]{GeckHiss} in the special
  case of $\SO_7(q)$ and $\Sp_6(q)$ for all prime powers $q$. We will
  see later in Theorem~\ref{thm:decmatSO5} that the conjecture also
  holds for the groups $\SO_5(q)$ and all prime powers $q$.
\end{enumerate}
\end{remark}

\medskip

In Section~\ref{subsec:rels} we derive some relations for the Brauer
characters of $G$ and~$G^*$. Section~\ref{subsec:cycblocks} gives some
cyclic blocks of the parabolic subgroups $P$ and~$P^*$. 
In order to find a good initial approximation for the decomposition
matrices of $G$ and $G^*$, we compute as a first step the decomposition matrix of
$\SO_5(q)$ in Section~\ref{subsec:decSO5}. It turns out that it coincides with the 
decomposition matrix of $\Sp_4(q)$ in \cite{OkuyamaWaki}. 
The approximation is given in Section~\ref{subsec:approx}, 
in which we also show that almost all entries are decomposition
numbers. Finally, in Section~\ref{subsec:proof} we dispel the last
ambiguities in the decomposition matrices and finish the proof of
Theorem~\ref{thm:decnumbers}. 

To eliminate these ambiguities we apply two approaches: for
$(q+1)_\ell > 5$ we make use of a recent result by Dudas (see
\cite{Dudas}) exploiting some deep results of Deligne-Lusztig theory
to obtain information on projective modules for $G$ and $G^*$ which
give upper bounds for the decomposition numbers. By providing suitable
relations of Brauer characters, we can show that these bounds are
tight. For small $(q+1)_\ell$ however, we need to find better upper
bounds than the ones Deligne-Lusztig theory provides. Hence for
$(q+1)_\ell \le 5$ we consider the above-mentioned cyclic blocks of
the parabolic subgroups $P$ and $P^*$. Again, we show that the
upper bounds thus obtained are tight by providing suitable character relations. 

\subsection{Relations}\label{subsec:rels}

In this section we always assume that $q$ is an odd prime power. 
Let $H$ be one of the groups $G = \SO_7(q)$, $G^* = \Sp_6(q)$,
$\widetilde{G}^* = \CSp_6(q)$, $G_5 = \SO_5(q)$ or $G_5^* = \Sp_4(q)$. 
For a class function $\psi$ of $H$ we write $\breve{\psi}$ for the
restriction of $\psi$ to the $\ell$-regular elements of $H$. Since
$\ell$ is odd the prime $\ell$ is good for the groups $\G$, $\G^*$ and
$\widetilde{\G}^*$ in the sense of~\cite[p.~28]{Carter}. A
general result of Geck and Hiss~\cite{GeckHissBasicSets} implies that  
$\{\breve{\chi} \mid \chi \in \Irr(H) \,\, \text{unipotent}\}$ is a
basic set for the unipotent blocks of~$H$. In this section we write
certain Brauer characters as $\Z$-linear combinations of the above basic
sets. The relations obtained in this way will be used in
Sections~\ref{subsec:decSO5}-\ref{subsec:proof} to determine lower
bounds for certain decomposition numbers of $H$. We denote a unipotent
character of $H$ by the bipartition labeling it; see Section~\ref{subsec:charsSpSO}.

\begin{lemma} \label{la:relsfromDLchars}
Let $q$ be odd. 
\begin{enumerate}
\item[(a)] There are ordinary characters $\chi_1, \chi_2$
  of~$\SO_5(q)$ such that 
  \begin{eqnarray*}
  \breve{\chi}_1 & = & -{[2,-,1]}\,\breve{} + {[-,-,3]} \,\breve{} + {[-,2,1]} \,\breve{},\\
  \breve{\chi}_2 & = & - {[-,-,3]} \,\breve{} -{[1^2,-,1]} \,\breve{} + {[-,1^2,1]} \,\breve{}.
  \end{eqnarray*}

\item[(b)] If $(q+1)_\ell > 3$ then there is an ordinary character
  $\chi_3$ of~$\SO_5(q)$ such that
  \begin{eqnarray*}
  \breve{\chi}_3 & = & {[2,-,1]}\,\breve{} -2 \cdot {[-,-,3]}\,\breve{} -{[1^2,-,1]}\,\breve{}   -
  {[-,2,1]}\,\breve{} + {[-,1^2,1]}\,\breve{}.
  \end{eqnarray*}

\item[(c)] Let $H \in \{\SO_7(q), \Sp_6(q)\}$. There are ordinary
  characters $\chi_{9,i}$ and $\chi_{13,j}$ of $H$ for $i=1,2,3$ and 
  $j=1,2,3,4$ such that 
  \begin{eqnarray*} 
  \breve{\chi}_{9,1} & = & {[3,-,1]}\,\breve{} - {[2,1,1]}\,\breve{} - 
  {[-,3,1]}\,\breve{} + {[1,2,1]}\,\breve{},\\
  \breve{\chi}_{9,2} & = & -{[-,3,1]} \,\breve{} - {[1,-,3]}\,\breve{} + 
  {[1,2,1]}\,\breve{} - {[1^2,1,1]}\,\breve{} + 
  {[1^3,-,1]}\,\breve{} + {[-,1,3]}\,\breve{},\\
  \breve{\chi}_{9,3} & = & {[1^2,1,1]}\,\breve{} - {[1,1^2,1]}\,\breve{} - 
  {[1^3,-,1]}\,\breve{} + {[-,1^3,1]}\,\breve{},\\
  \breve{\chi}_{13,1} & = & -{[3,-,1]}\,\breve{} +{[-,3,1]} \,\breve{}
  +{[1,-,3]}\,\breve{}-{[1,2,1]} \,\breve{} +{[1^2,1,1]}\,\breve{},\\
  \breve{\chi}_{13,2} & = & -{[-,3,1]} \,\breve{} - {[1,-,3]}\,\breve{} -
  {[1^2,1,1]}\,\breve{} + {[1,1^2,1]} \,\breve{} + {[1^3,-,1]}\,\breve{},\\ 
  \breve{\chi}_{13,3} & = & -{[2,1,1]} \,\breve{} - {[-,3,1]} \,\breve{} +
  {[1,2,1]} \,\breve{} + {[1^3,-,1]} \,\breve{} + {[-,1,3]}\,\breve{},\\
  \breve{\chi}_{13,4} & = & -{[1,2,1]}\,\breve{} +{[1^2,1,1]} \,\breve{}
  -{[1^3,-,1]} \,\breve{} -{[-,1,3]}\,\breve +{[-,1^3,1]}\,\breve{}.
  \end{eqnarray*}

\item[(d)] Let $H \in \{\SO_7(q), \Sp_6(q)\}$. If $(q+1)_\ell > 3$
  then there are ordinary characters $\chi_{23,1}, \chi_{23,2}$ of $H$
  such that
  \begin{eqnarray*} 
  \breve{\chi}_{23,1} & = &
  {[3,-,1]}\,\breve{} -2 \cdot {[-,3,1]}\,\breve{} -2 \cdot {[1,-,3]}\,\breve{}
  +{[1,2,1]}\,\breve{}-2 \cdot {[1^2,1,1]}\,\breve{} \\
  && \quad +{[1,1^2,1]}\,\breve{} +
  {[1^3,-,1]}\,\breve{},\\
  \breve{\chi}_{23,2} & = &
  {[2,1,1]}\,\breve{} + {[-,3,1]}\,\breve{} -2 \cdot {[1,2,1]}\,\breve{} + 
  {[1^2,1,1]}\,\breve{}-2 \cdot {[1^3,-,1]}\,\breve{}\\
  && \quad -2 \cdot {[-,1,3]}\,\breve{} 
  +{[-,1^3,1]}\,\breve{}.
  \end{eqnarray*}

\item[(e)] Let $H \in \{\SO_7(q), \Sp_6(q)\}$. If $(q+1)_\ell > 5$
  then there is an ordinary character $\chi_{32}$ of~$H$ such that
 \begin{eqnarray*}
  \breve{\chi}_{32} & = & 
  -{[3,-,1]}\,\breve{} + {[2,1,1]}\,\breve{} +3 \cdot {[-,3,1]}\,\breve{} + 2 \cdot {[1,-,3]}\,\breve{} -
  3 \cdot {[1,2,1]}\,\breve{} \\
  && +3 \cdot {[1^2,1,1]}\,\breve{} - {[1,1^2,1]}\,\breve{} - 3 \cdot
  {[1^3,-,1]}\,\breve{} - 2 \cdot {[-,1,3]}\,\breve{} + {[-,1^3,1]}\,\breve{}.
 \end{eqnarray*}
 \end{enumerate}
\end{lemma}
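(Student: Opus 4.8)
The plan is to obtain each relation by taking the $\ell$-regular restriction of the explicit Deligne--Lusztig-theoretic characters $\chi_1,\chi_2,\chi_3$ of $\SO_5(q)$ and $\chi_{9,i},\chi_{13,j},\chi_{23,k},\chi_{32}$ of $\SO_7(q)$ constructed in Section~\ref{subsec:charsSpSO}, and expressing the result in the unipotent basic set $\{\breve\chi\mid\chi\in\Irr(H)\text{ unipotent}\}$ guaranteed by \cite{GeckHissBasicSets}. The first step is to recall that each of these characters is a $\Z$-linear (indeed $\{0,\pm1,\pm2,\pm3\}/d$ with integral outcome) combination of Deligne--Lusztig characters $R_w(\mathbf t)$ attached to non-trivial semisimple elements, hence lies in a non-unipotent Lusztig series; in particular each $\chi$ is orthogonal to every unipotent character, and its restriction $\breve\chi$ to $\ell$-regular classes must be a $\Z$-combination of the $\breve\chi_{\text{unip}}$ only once we know it lies in the span of the unipotent basic set modulo the non-principal blocks. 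The key point making this work is that for $\ell\mid q+1$, the semisimple elements $g(t_1,t_2)$, $g_9(t)$, etc., have $\ell'$-order conditions ($t_i^{q+1}=1$ with $\ell\mid q+1$ forces these $t_i$ to be $\ell$-singular unless equal to $\pm1$), so that upon restriction to $\ell$-regular elements the parameter collapses and the Deligne--Lusztig character $\breve R_w(\mathbf t)$ becomes $\breve R_w(1)$, a virtual unipotent character.

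Concretely, I would proceed as follows. For part~(a): restrict \eqref{eq:irr12SO5} to $\ell$-regular elements. Since $t_2^{q+1}=1$ and $\ell\mid q+1$, on $\ell$-regular classes the linear character of $\T_w^F$ attached to $g(t_1,t_2)$ becomes trivial (as its $\ell'$-part is trivial), so $\breve R_w(t_1,t_2)=\breve R_w(1)$; then $\breve\chi_1=-\tfrac12(\breve R_{w_{212}}(1)+\breve R_{w_{212}w_1}(1))$, and one decomposes these two Deligne--Lusztig characters of $\SO_5(q)$ into unipotent irreducibles using the known character table / the Lusztig decomposition (computable in \CHEVIE), obtaining the stated signed sum of the six unipotent characters of $\SO_5(q)$; similarly for $\breve\chi_2$. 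For~(b) one does the same with $\chi_3=R_{w_0}(t_1,t_2)$ from \eqref{eq:irrregSO5}, but now one needs $(q+1)_\ell>3$: this is exactly the condition ensuring $t_1,t_2$ of order dividing $q+1$ with $t_i\neq\pm1,t_2\neq t_1^{\pm1}$ actually exist as $\ell$-singular elements, equivalently $|\{\pm1\}|<(q+1)_\ell$ leaves room; without it the parameter set in~4) of the $m=2$ construction is empty and there is no such $\chi_3$. Parts~(c), (d), (e) are the $\SO_7(q)$ analogues: restrict \eqref{eq:chi9_SO7}, \eqref{eq:chi13_SO7}, \eqref{eq:chi23_SO7}, \eqref{eq:chi32_SO7}, use $t_i^{q+1}=1\Rightarrow$ trivial on $\ell'$-part, reduce each $\breve R_w(\mathbf t)$ to $\breve R_w(1)$, and decompose the resulting virtual unipotent characters of $\SO_7(q)$ into the twelve unipotent irreducibles via \CHEVIE; the numerical coefficients $1,2,3$ and the existence hypotheses $(q+1)_\ell>3$ (for $\chi_{23,k}$) and $(q+1)_\ell>5$ (for $\chi_{32}$) come, respectively, from the sizes needed for the tori $t_1=\pm1,t_2,t_3$ with $t_3\neq\pm1,t_2^{\pm1}$ (needs at least $3$ distinct $(q+1)_\ell$-roots of unity closed under inversion beyond $\pm1$) and $t_1,t_2,t_3$ pairwise non-conjugate (needs at least $6$ such, i.e. $(q+1)_\ell>5$; or one checks the regular/general-position condition directly).

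Finally, for $H=\Sp_6(q)$ in parts (c)--(e): rather than redo the Deligne--Lusztig computation in type $C_3$, I would transport the relations via the duality $(\G,F)\leftrightarrow(\G^*,F^*)$. The isomorphism $\delta:\W\to\W^*$ identifies the relevant twisted tori and the labelling of unipotent characters of $\SO_7(q)$ and $\Sp_6(q)$ by bipartitions/symbols is set up (Table~\ref{tab:labelsSO7}) precisely so that the unipotent parts of Deligne--Lusztig characters match; hence the same $\Z$-combinations of unipotent characters arise, giving ordinary characters $\chi_{9,i},\chi_{13,j},\chi_{23,k},\chi_{32}$ of $\Sp_6(q)$ with the identical $\ell$-regular decompositions. (Alternatively, for even $q$ one does not need this case at all since only odd $q$ is assumed in this lemma, but the duality argument is uniform.) The main obstacle I anticipate is bookkeeping, not conceptual: verifying that the numerical coefficients in \eqref{eq:chi9_SO7}--\eqref{eq:chi32_SO7} — which involve the roots of unity $\varepsilon_i'$ and the Jordan decomposition normalisations from \cite[Lemma~7.1]{DissLuebeck} — combine correctly with the Lusztig decomposition of the unipotent $R_w(1)$ to yield exactly the integer-coefficient expressions stated, and double-checking (via degrees, which are recorded after each construction, and via \CHEVIE) that no non-unipotent or non-principal-block constituent survives the $\ell$-regular restriction. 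A secondary subtlety is confirming that for $\ell\mid q+1$ the $\ell$-regular restriction of $R_w(\mathbf t)$ genuinely equals $R_w(1)$ and not merely agrees with it up to characters of other blocks — this follows because the $\ell'$-part of $g_9(t)$ etc. is trivial, so the associated Lusztig series of $\breve{}$-restricted characters is the unipotent one, but it should be stated carefully.
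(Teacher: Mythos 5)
Your overall strategy for parts (a)--(e) in the case $H=\SO_7(q)$ and for $\SO_5(q)$ matches the paper's: choose the semisimple parameter to be an $\ell$-element, observe that the $\ell$-regular restriction of $R_w(\mathbf t)$ then coincides with that of $R_w(1)$ (the paper cites \cite[Proposition~12.6]{DigneMichel91} with the $p$-constant characteristic function of $\ell$-regular classes; your ``the $\ell'$-part of the parameter is trivial'' phrasing is the same idea), and decompose the resulting virtual unipotent characters with \CHEVIE. One small slip: $t_i^{q+1}=1$ does \emph{not} force $t_i$ to be $\ell$-singular, let alone an $\ell$-element, unless $\pm1$ are the only $\ell'$-elements of order dividing $q+1$; what is true, and what the paper says, is that for suitable $(q+1)_\ell$ one may \emph{choose} the $t_i$ to be $\ell$-elements subject to the inequalities required, and this is where the hypotheses $(q+1)_\ell>3$, $(q+1)_\ell>5$ in parts (b), (d), (e) come from. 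That is a correctable slip of wording, not a wrong argument.

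The genuine gap is in your treatment of $H=\Sp_6(q)$. You propose to ``transport the relations via duality'' by arguing that $\delta:\W\to\W^*$ identifies the twisted tori and that the labelling by bipartitions matches, and hence that the same $\Z$-combinations of unipotent characters arise. This does not give a proof. The characters $\chi_{9,i}$ etc.\ of $\SO_7(q)$ are constructed from semisimple classes in the \emph{dual} group $\Sp_6(\F)$ (which has connected center once you pass to $\CSp_6$); to mimic the construction for $\Sp_6(q)$ directly you would need semisimple classes in \emph{its} dual $\SO_7(\F)$, and moreover $\Sp_6(\F)$ itself has disconnected center, so Lusztig's Jordan decomposition and \cite[Lemma~7.1]{DissLuebeck} do not apply to it in the same form. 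The paper gets around this by working inside $\widetilde G^*=\CSp_6(q)$ (connected center), using L\"ubeck's explicit character table in the \CHEVIE\ library to verify the analogous relations there, and then transporting them to $G^*=\Sp_6(q)$ via the fact (\cite[p.~140]{DigneMichel91}, \cite[Theorem~11.12]{BonnafeSLn}) that restriction is a bijection on unipotent characters from $\widetilde G^*$ to $G^*$. Without an argument of this kind --- or some other genuine link between the two Deligne--Lusztig computations --- the $\Sp_6(q)$ half of parts (c), (d), (e) is unproven in your write-up. The rest is essentially the paper's proof.
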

\begin{proof}
(a): Let $H := \SO_5(q)$. In the notation of Section~\ref{subsec:charsSpSO} 
  there is $t_2 \in \F^\times$ with $t_2^{q+1}=1$, $t_2 \neq \pm 1$ such
  that $g(1,t_2)$ is an $\ell$-element. Now we invoke 
  \cite[Proposition~12.6]{DigneMichel91}: Let $f$ denote the
  characteristic function on $\ell$-regular classes, i.e.\ for $x \in H$ 
  we set $f(x):=1$ if and only if $\ell \nmid |x|$, and $f(x):=0$ otherwise. 
  Note that $f \in \mathcal{C}(G)_{p'}$, i.e.\ is $p$-constant as
  $\ell \neq p$, so $f(x) = f(x_{p'})$. We therefore obtain that 
  $R_w(1,t_2)\,\breve{} = R_w(1)\,\breve{}$ for $w \in \{w_{212}, w_{212}w_1\}$.
  Using \CHEVIE\ we decompose $R_{w_{212}}(1)\,\breve{}$,
  $R_{w_{212}w_1}(1)\,\breve{}$ into a $\Z$-linear combination of    
  $\{\,\breve{\chi} \mid \chi \in \Irr(H) \,\, \text{unipotent}\}$. 
  It follows from~\eqref{eq:irr12SO5} and from 
  $\varepsilon_1' = \varepsilon_2' = -1$ that 
  $\chi_1 := \chi_1(1,t_2)$ and $\chi_2 := \chi_2(1,t_2)$ satisfy the
  equations in (a).

\smallskip

\noindent (b): If $(q+1)_\ell > 3$ then there are $t_1, t_2 \in \F^\times$ 
  with $t_1^{q+1} = t_2^{q+1} = 1$, $t_1,t_2 \neq \pm 1$ and 
  $t_2 \neq t_1^{\pm1}$ such that $g_\reg(t_1, t_2)$ is an
  $\ell$-element. In the same way as in (a) we can derive from
  \eqref{eq:irrregSO5} that $\chi_3 := \chi_3(t_1,t_2)$ satisfies the
  equation in (b).

\smallskip

\noindent (c),(d),(e): Suppose first that $H = \SO_7(q)$. As in the
proof of (a), (b) we can use equations~\eqref{eq:chi9_SO7}-\eqref{eq:chi32_SO7}, 
\cite[Proposition~12.6]{DigneMichel91} and \CHEVIE\ to see that
$\chi_{9,j} := \chi_{9,j}(t)$ for $j=1,2,3$, 
$\chi_{13,j} := \chi_{13,j}(t_1,t_2)$ for $j=1,2,3,4$, 
$\chi_{23,j} := \chi_{23,j}(t_1,t_2,t_3)$ for $j=1,2$ and
$\chi_{32} := \chi_{32}(t_1,t_2,t_3)$ for suitable choices
of the field elements $t$,~$t_1$,~$t_2$,~$t_3$ satisfy the 
equations in (c),(d),(e). 

Now suppose that $H = \Sp_6(q)$ so that $H = G^*$ is a subgroup of 
$\widetilde{G}^* = \CSp_6(q)$. The character table of
$\widetilde{G}^*$ was computed by L\"ubeck~\cite{DissLuebeck} and is
contained in the \CHEVIE\ library. It follows from~\cite[p.~140]{DigneMichel91},
\cite[Theorem~11.12]{BonnafeSLn} and from the degrees of the unipotent
characters of $G^*$ and $\widetilde{G}^*$ that restriction induces a
bijection between the set of unipotent characters of $\widetilde{G}^*$
and the set of unipotent characters of $G^*$. If $\chi_\Lambda$ is a
unipotent character of $G^*$ with the label $\Lambda$ we denote its
unipotent extension to $\widetilde{G}^*$ also by $\chi_\Lambda$ or
just $\Lambda$. Using the notation from~\cite{DissLuebeck} and the
explicit knowledge of the character table of $\widetilde{G}^*$ we get 
\begin{eqnarray*}
(\chi_{9,1}(k_1,k_2)\smash\downarrow^{\widetilde{G}^*}_{G^*})\,\breve{} & = &
({[3,-,1]}\smash\downarrow^{\widetilde{G}^*}_{G^*})\,\breve{} - 
({[2,1,1]}\smash\downarrow^{\widetilde{G}^*}_{G^*})\,\breve{} - 
({[-,3,1]}\smash\downarrow^{\widetilde{G}^*}_{G^*})\,\breve{} + 
({[1,2,1]}\smash\downarrow^{\widetilde{G}^*}_{G^*})\,\breve{} \\
& = & {[3,-,1]}\,\breve{} - {[2,1,1]}\,\breve{} - 
{[-,3,1]}\,\breve{} + {[1,2,1]}\,\breve{}
\end{eqnarray*}
for a suitable choice of the parameters $k_1,k_2$. Thus, with this
choice of $k_1$ and $k_2$, the character $\chi_{9,1} := \chi_{9,1}(k_1,k_2)$ satisfies the first equation
in~(c). Similarly, $\chi_{9,j} := \chi_{9,j}(k_1,k_2)\smash\downarrow^{\widetilde{G}^*}_{G^*}$ for
$j=2,3$, $\chi_{13,j} := \chi_{13,j}(k_1,k_2)\smash\downarrow^{\widetilde{G^*}}_{G^*}$ for $j=1,2,3,4$, 
$\chi_{23,j} := \chi_{23,j}(k_1,k_2,k_3)\smash\downarrow^{\widetilde{G}^*}_{G^*}$ for $j=1,2$, 
and finally $\chi_{32} := \chi_{32}(k_1,k_2,k_3,k_4)\smash\downarrow^{\widetilde{G}^*}_{G^*}$
for suitable parameters $k_1$, $k_2$, $k_3$, $k_4$ satisfy the remaining
equations in (c),(d),(e).
\end{proof}

\begin{remark}
The proof of Lemma~\ref{la:relsfromDLchars} is constructive in the
sense that it gives explicit descriptions of the ordinary characters
$\chi_i$ and $\chi_{i,j}$ on the left hand side of the equations in (a)-(e).
\end{remark}

\subsection{Cyclic blocks of parabolic subgroups}
\label{subsec:cycblocks}

In this section we determine certain cyclic blocks of the maximal 
parabolic subgroups $P = P_7$ of $G = \SO_7(q)$ and $P^* = P_6^*$ of 
$G^* = \Sp_6(q)$. Owing to the simple structure of their projective
indecomposable modules, these will serve our purposes twofold: via
induction they provide a source of projective modules for
an approximation of the decomposition matrices of their overgroups, while also
simultaneously giving small bounds for the decomposition numbers. 

In Section~\ref{subsec:decSO5} we calculate the decomposition
numbers of the principal block of $G_5=\SO_5(q)$ to later use this
information for $G = \SO_7(q)$. Therefore, we begin this section by
determining cyclic blocks of the parabolic subgroup $P_5$. 

To enable a concise statement of the following assertions, we need to
introduce some notation. Recall from Section~\ref{subsec:charsparabolic}
that the inertia subgroup $I^-$ of $\lambda^-$ in $P_n$ decomposes as
$L^-_n \ltimes U_n$ where $L^-_n \cong \GO_{n-3}^-(q)$. If
$n=5$ we have $L_5^- \cong \GO_{2}^-(q)$, which is a dihedral group of 
order $2(q+1)$; see \cite[Theorem~11.4]{Taylor}. For odd~$q$, let
$\nu_1^-$ 
be the non-trivial linear character of $L_5^-$ with 
$\SO_2^-(q) \le \ker(\nu_1^-)$; see~\cite[Remark~7.1]{HimNoeResSO} for
additional information. If~$q$ is even then~$L^{-}_5$ only possesses
two linear characters, and in this case we write~$\nu_1^-$ for the
unique non-trivial linear character of $L^{-}_5$. 

\begin{lemma} \label{la:cyclicblocksSO5}
Let $P_5$ be the parabolic subgroup of $G_5 = \SO_5(q)$ defined in
Section~\ref{subsec:parabolicsubgrps}. 
For all prime powers $q$ there is a cyclic block $b_5$ of $P_5$
whose Brauer tree is given in Table~\ref{tab:Brauertrees} with 
  \[ 
   \xi_1 := {^-}\psi_{1_{L_5^-}}, \text{~}
   \xi_2 := {^-}\psi_{\nu_1^-}, \text{ and }
   \xi_\mathrm{exc} := {^-}\psi_{\Xi},
  \]
  where $\Xi$ is the sum of all irreducible characters of degree $2$
  in the principal block of $L_5^-$. 
\end{lemma}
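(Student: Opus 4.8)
The plan is to identify the block $b_5$ explicitly by analysing the character theory of $P_5 = L_5 \ltimes U_5$ via Clifford theory over the elementary abelian normal subgroup $U_5$, and then to determine its defect group and decomposition matrix. First I would recall from Section~\ref{subsec:charsparabolic} that the irreducible characters of $P_5$ of Type~$-$ are exactly the $\,{}^-\psi_\vartheta$ for $\vartheta \in \Irr(\GO_2^-(q)) = \Irr(L_5^-)$, obtained by inducing the extension $\hat\lambda^- \cdot \infl(\vartheta)$ from $I^- = L_5^- \ltimes U_5$ to $P_5$. Since $U_5$ is an $\ell'$-group (as $\ell \mid q+1$ and $|U_5| = q^{n-2}$ is a power of $p \ne \ell$), the $\ell$-modular representation theory of $P_5$ restricted to the characters covering $\lambda^-$ is Morita equivalent, via this induction, to that of $L_5^- \cong \GO_2^-(q)$, a dihedral group of order $2(q+1)$. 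Concretely, the characters of $P_5$ lying over the $P_5$-orbit of $\lambda^-$ fall into the same $\ell$-blocks, with the same Cartan/decomposition data, as the characters of $L_5^-$; so it suffices to locate the relevant block of $L_5^-$ and transport it.

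Next I would pin down the block $b$ of $L_5^- \cong \GO_2^-(q)$ in question. The dihedral group $L_5^-$ has a cyclic normal subgroup $\SO_2^-(q)$ of order $q+1$, and its character theory is classical: two linear characters trivial on $\SO_2^-(q)$ (one being $1_{L_5^-}$, the other $\nu_1^-$) together with $\tfrac{q-1}{2}$ irreducible characters of degree $2$. The principal $\ell$-block of $L_5^-$ has as its defect group a Sylow $\ell$-subgroup of $\SO_2^-(q)$, which is cyclic of order $(q+1)_\ell$; hence the principal block is cyclic. Its Brauer tree is the standard one for a dihedral group over a field of odd characteristic $\ell \mid q+1$: a line (open polygon) with the two linear characters $1_{L_5^-}$ and $\nu_1^-$ at the ends and the exceptional vertex, carrying the sum $\Xi$ of all degree-$2$ characters in the principal block, in the middle (it is this tree shape that matches Table~\ref{tab:Brauertrees}). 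I would then set $b_5$ to be the block of $P_5$ corresponding to $b$ under the Morita equivalence above: explicitly, the block containing $\xi_1 = {}^-\psi_{1_{L_5^-}}$, $\xi_2 = {}^-\psi_{\nu_1^-}$, and the characters ${}^-\psi_\vartheta$ for $\vartheta$ of degree $2$ in $b$, with $\xi_{\mathrm{exc}} = {}^-\psi_\Xi$ their sum.

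Finally I would verify that $b_5$ is genuinely a block of $P_5$ with the claimed Brauer tree. The key point is that Harish-Chandra-type induction $\varphi \mapsto (\hat\lambda^- \cdot \infl(\varphi))\!\uparrow_{I^-}^{P_5}$ from $\Irr(L_5^-)$-modules (extended over $U_5$) to $P_5$-modules is an equivalence onto the sum of blocks of $P_5$ covering the $\ell$-block $b$ of $L_5^-$ viewed inside $I^-$: because $U_5$ is an $\ell'$-group it contributes nothing to the $\ell$-modular structure, and because $\hat\lambda^-$ is a fixed one-dimensional extension the construction is exact and preserves projectivity, indecomposability, and block decomposition. Thus the decomposition matrix and Cartan matrix of $b_5$ equal those of $b$, giving a cyclic block with the asserted tree, with exceptional multiplicity $\tfrac{q-1}{2}$ reflected by $\xi_{\mathrm{exc}}$ being a sum of $\tfrac{q-1}{2}$ irreducibles. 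The main obstacle I anticipate is bookkeeping rather than conceptual: one must make sure that the inertia group of $\lambda^-$ is exactly $I^- = L_5^- \ltimes U_5$ (so that induction from $I^-$ lands irreducibly and the Clifford correspondence is clean), and one must correctly read off which degree-$2$ characters of the dihedral group $L_5^-$ lie in the principal $\ell$-block — i.e.\ identify the exceptional characters — which amounts to the elementary fact that exactly those degree-$2$ characters whose restriction to $\SO_2^-(q)$ has order divisible by $\ell$ are in the principal block, and these are precisely the $\tfrac{q-1}{2}$ of them grouped into $\Xi$ when $(q+1)_\ell$ is a proper divisor of $q+1$ (and all of them, with the appropriate count, in general); checking that the end nodes are the two linear characters, and not some degree-$2$ character, is the only place where a small argument using the shape of the Brauer tree of a cyclic block (the ordinary characters at the ends are the non-exceptional ones of smallest "height", here the linear ones) is needed.
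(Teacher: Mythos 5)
Your approach is essentially the paper's: the paper invokes the Fong--Reynolds correspondence (citing Fong 1961, Theorems 2B and 2D) to pass from blocks of $P_5$ covering the block $\{\lambda^-\}$ of the normal $\ell'$-subgroup $U_5$ to blocks of the inertia subgroup $I^- = L_5^- \ltimes U_5$, and then, since the extension is split and $\lambda^-$ extends, to blocks of $L_5^- \cong D_{2(q+1)}$. Your ``Clifford theory / Morita equivalence over $U_5$'' framing amounts to the same reduction, and the concluding appeal to the known cyclic block theory of the dihedral group matches the paper, which likewise just cites ``the well-known blocks of the dihedral group.''

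Two details in your final paragraph are wrong, though, and the first would propagate an error if used later. The exceptional multiplicity is $((q+1)_\ell - 1)/2$, not $(q-1)/2$: the defect group is a Sylow $\ell$-subgroup of $\SO_2^-(q)$, of order $(q+1)_\ell$, and since the inertial index is $2$ the exceptional multiplicity equals $((q+1)_\ell - 1)/2$. Equivalently, $\Xi$ is the sum of $((q+1)_\ell - 1)/2$ degree-$2$ characters, not $(q-1)/2$ of them; the latter is the \emph{total} number of degree-$2$ characters of $D_{2(q+1)}$ only when $q$ is odd, and they do not all lie in the principal block unless $(q+1)_\ell = q+1$. This number is not cosmetic: it is exactly the quantity $m_{\mathrm{exp}} = ((q+1)_\ell-1)/2$ that reappears in Lemma~\ref{la:indproj} and in the scalar products of Table~\ref{tab:scalSO5}. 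Relatedly, your characterization of which degree-$2$ characters lie in the principal block --- ``those whose restriction to $\SO_2^-(q)$ has order divisible by $\ell$'' --- is incorrect; the right condition is that the restricted character (equivalently, its two linear constituents on the rotation subgroup) has $\ell$-power order, i.e.\ trivial $\ell'$-part. A character of order $\ell m$ with $m>1$ coprime to $\ell$ has order divisible by $\ell$ but lies in a non-principal block.
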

\setlength{\unitlength}{1cm}
\begin{table}[h]
 \centering
 \begin{tabular}{c}
  \begin{picture}(8,1.2)
    \multiput(3,0.5)(1.3,0){3}{\circle*{0.15}}
    \multiput(3,0.5)(1.3,0){2}{\line(1,0){1.3}}
    \put(4.3,0.5){\circle{0.3}}
    \put(2.8,0.8){$\xi_1$}
    \put(4.1,0.8){$\xi_\mathrm{exc}$}
    \put(5.4,0.8){$\xi_2$}
   \end{picture}
 \end{tabular}
 \caption{The Brauer tree of cyclic blocks of $P_5$, $P_7$, and $P_6^*$.}
 \label{tab:Brauertrees}
\end{table}
\begin{proof}
This is an application of Fong-Reynolds correspondence. 
Let $\xi \in \Irr(U_5)$, then $\xi$ is the only ordinary character in its
block $b$, since $U_5$ is an $\ell'$-group. By \cite[2B]{Fong1961} there
is a bijection of blocks, preserving defect groups and decomposition
matrices, between the blocks $\mathrm{Bl}(P_5 \mid b)$ covering $b$,
and the blocks of the inertia subgroup of $b$ covering $b$. We
take $\xi := \lambda^-$ as in Section~\ref{subsec:charsparabolic}
and write~$b^-$ for the block of~$U_5$ containing $\lambda^-$. Thus we
have a bijection between the sets $\mathrm{Bl}(P | b^-)$ and 
$\mathrm{Bl}(I^- | b^-)$. 

As $I^- \cong L_5^- \ltimes U_5$ is a split extension and $\lambda^-$
extends to $I^-$, \cite[2D]{Fong1961} gives a bijection between 
$\mathrm{Bl}(I^- | b^-)$ and the blocks of $L_5^- \cong D_{2(q+1)}$
preserving defect groups and decomposition matrices. As this
correspondence is realized via the character constructions detailed 
in Section~\ref{subsec:charsparabolic}, the claim follows from the
well-known blocks of the dihedral group $L_{5}^{-} \cong D_{2(q+1)}$.
\end{proof}

For the symplectic group $G^*$ and odd $q$, we follow
\cite{AnHissUnipotent} notationwise: we define the ordinary
irreducible characters $\nu_7$ and $\nu_8$ of the group $\Sp_2(q) = \SL_2(q)$ of
degree $(q-1)/2$ in the same way as~\cite[Lemma~5.1]{AnHissUnipotent}.

\begin{lemma} \label{la:cyclicblocks}
Let $P = P_7$ be the parabolic subgroup of $G = \SO_7(q)$ and 
$P^* = P_6^*$ the parabolic subgroup of $G^* = \Sp_6(q)$ defined in
Section~\ref{subsec:parabolicsubgrps}. 
\begin{itemize} 
\item[(a)] For all prime powers $q$ there is a cyclic block $b$ of $P$
  whose Brauer tree is given in Table~\ref{tab:Brauertrees} with
   \[
   \xi_1 := {^0}\psi_{({^-}\psi_{1_{L^{-}_5}})}, \text{~}
   \xi_2 := {^0}\psi_{({^-}\psi_{\nu_1^-})}, \text{ and }
   \xi_\mathrm{exc} := {^0}\psi_{({^-}\psi_{\Xi})},
   \]
   where $\Xi$ is the sum of all irreducible characters of degree $2$
   in the principal block of $L^{-}_5$.

\item[(b)] If $q$ is odd then there is a cyclic block $b^*$ of $P^*$
  whose Brauer tree is given in Table~\ref{tab:Brauertrees} with
  \[ 
   \xi_1 := {^2}\psi_{({^3}\psi^{1,+}_{\nu_7})}, \text{~}
   \xi_2 := {^2}\psi_{({^3}\psi^{1,+}_{\nu_8})}, \text{ and }
   \xi_\mathrm{exc} := {^2}\psi_{({^3}\psi^{1,+}_{\Xi})},
  \] 
  where $\Xi$ is the sum of all irreducible characters of degree $q-1$
  lying in the quasi-isolated block of $\Sp_2(q) = \SL_2(q)$
  containing $\nu_7$ and $\nu_8$.
 \end{itemize}
\end{lemma}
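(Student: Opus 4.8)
The plan is to prove Lemma~\ref{la:cyclicblocks} by the same Fong--Reynolds argument that proves Lemma~\ref{la:cyclicblocksSO5}, applied now with one extra layer of induction of characters. Recall that the irreducible characters appearing in the three cases are built up by the successive constructions of Section~\ref{subsec:charsparabolic}: for $P=P_7$ a character of Type~$0$ over a character of $P_5$ which is itself of Type~$-$ over $L_5^- \cong \GO_2^-(q) \cong D_{2(q+1)}$; for $P^*=P_6^*$ a character of Type~$2$ over a character of $P_4^*$ which is of Type~$3$ over $L^{*'}\times Z^* \cong \Sp_2(q)\times Z^*$, restricted to the component $\Sp_2(q)=\SL_2(q)$ via the trivial extension $1_{Z^*}^+$. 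In both cases the last group in the chain is one whose $\ell$-modular block theory is classically known: $D_{2(q+1)}$ for $\ell\mid q+1$ (its non-principal blocks are cyclic with the displayed star-shaped Brauer tree, the exceptional vertex being the sum of the degree-$2$ characters), and $\SL_2(q)$ for $\ell\mid q+1$ (the quasi-isolated/non-principal block containing the two characters $\nu_7,\nu_8$ of degree $(q-1)/2$ is cyclic with exactly this Brauer tree, cf.\ \cite{Fong1961} or the standard $\SL_2$ tables).

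For part~(a), I would first fix $\xi:=\lambda^- \in \Irr(U_5)$ as in Lemma~\ref{la:cyclicblocksSO5}, producing via \cite[2B, 2D]{Fong1961} a block of $P_5$ lying over $\lambda^-$ whose defect groups and decomposition matrix agree with those of a non-principal block $b_5$ of $L_5^-\cong D_{2(q+1)}$; this is exactly Lemma~\ref{la:cyclicblocksSO5}, so $b_5$ is cyclic with the tree of Table~\ref{tab:Brauertrees} and vertices $\xi_1={}^-\psi_{1_{L_5^-}}$, $\xi_2={}^-\psi_{\nu_1^-}$, $\xi_{\mathrm{exc}}={}^-\psi_{\Xi}$. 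Now view these as characters of $P_5\cong \tilde P_5$, take $\xi':=\lambda^0\in\Irr(U_7)$ with inertia subgroup $I^0=\tilde P_5 \ltimes U_7$, and apply Fong--Reynolds once more: since $\lambda^0$ is an $\ell'$-character of the $\ell'$-group $U_7$ and extends to $I^0$, \cite[2B, 2D]{Fong1961} gives a bijection $\mathrm{Bl}(P_7\mid b_{\lambda^0})\leftrightarrow\mathrm{Bl}(I^0\mid b_{\lambda^0})\leftrightarrow\mathrm{Bl}(\tilde P_5)$ preserving defect groups and decomposition matrices, and under the explicit character correspondence of Section~\ref{subsec:charsparabolic} this bijection sends ${}^-\psi_\vartheta\mapsto{}^0\psi_{({}^-\psi_\vartheta)}$. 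Hence the block $b$ of $P_7$ corresponding to $b_5$ is cyclic with the same Brauer tree and vertices ${}^0\psi_{({}^-\psi_{1_{L_5^-}})}$, ${}^0\psi_{({}^-\psi_{\nu_1^-})}$, ${}^0\psi_{({}^-\psi_{\Xi})}$, as claimed.

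For part~(b) the structure is identical but the bottom of the chain is $\SL_2(q)$ and there is the extra central factor $Z^*=\langle -I\rangle$. I would start from $\xi:=\rho_1\in\Irr(U_6^*)$, the only ordinary character in its ($\ell'$-)block of $U_6^*$, with inertia subgroup $I^3=(L^{*'}\times Z^*)\ltimes U_6^*$; as $\rho_1$ extends to $I^3$ (we fixed an extension $\hat\rho_1$), \cite[2B, 2D]{Fong1961} identifies blocks of $P_4^*$ lying over $\rho_1$ with blocks of $L^{*'}\times Z^* \cong \Sp_2(q)\times Z^*$ compatibly with defect groups and decomposition matrices. Since $\ell$ is odd, $|Z^*|=2$ is invertible, so $kZ^*$ is semisimple and the blocks of $\Sp_2(q)\times Z^*$ are just pairs (block of $\Sp_2(q)$, linear character of $Z^*$); choosing the trivial character $1_{Z^*}^+$ and the quasi-isolated block of $\SL_2(q)$ containing $\nu_7$ and $\nu_8$ yields, by the known $\ell$-modular structure of $\SL_2(q)$ for $\ell\mid q+1$, a cyclic block with the Brauer tree of Table~\ref{tab:Brauertrees} and exceptional vertex the sum $\Xi$ of the degree-$(q-1)$ characters in that block. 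This corresponds to a cyclic block $b^*_4$ of $P_4^*$ with vertices ${}^3\psi^{1,+}_{\nu_7}$, ${}^3\psi^{1,+}_{\nu_8}$, ${}^3\psi^{1,+}_{\Xi}$. Finally, set $\xi':=\lambda\in\Irr(U_6^*)$ with inertia subgroup $I^2=\tilde P_4^*\ltimes U_6^*$ and run Fong--Reynolds a last time; as before $\lambda$ is an $\ell'$-character extending to $I^2$, the resulting bijection $\mathrm{Bl}(P_6^*\mid b_\lambda)\leftrightarrow\mathrm{Bl}(\tilde P_4^*)$ preserves defect groups and decomposition matrices, and on characters sends $\mu\mapsto{}^2\psi_\mu$ under the identification $\Irr(\tilde P_4^*)=\Irr(P_4^*)$ of \cite{AnHissUnipotent}. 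Therefore the block $b^*$ of $P_6^*$ corresponding to $b^*_4$ is cyclic with the stated Brauer tree and vertices ${}^2\psi_{({}^3\psi^{1,+}_{\nu_7})}$, ${}^2\psi_{({}^3\psi^{1,+}_{\nu_8})}$, ${}^2\psi_{({}^3\psi^{1,+}_{\Xi})}$.

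The routine part is the bookkeeping: checking at each stage that the relevant character of the unipotent radical is indeed $\ell$-regular (immediate, since the radicals are $\ell'$-groups), that it extends to its inertia subgroup (guaranteed by the explicit extensions $\hat\lambda^0$, $\hat\rho_i$, $\hat\lambda$ fixed in Section~\ref{subsec:charsparabolic}), and that the Fong--Reynolds correspondence on blocks matches the character-labelling bijections of \cite{HimNoeResSO} and \cite{AnHissUnipotent} — this last point is exactly the sentence ``as this correspondence is realized via the character constructions detailed in Section~\ref{subsec:charsparabolic}'' used in the proof of Lemma~\ref{la:cyclicblocksSO5}, applied one extra time. The only genuine input beyond Lemma~\ref{la:cyclicblocksSO5} is the identification of the relevant non-principal block of $\SL_2(q)$ for $\ell\mid q+1$ as cyclic with the displayed tree, with the two degree-$(q-1)/2$ characters $\nu_7,\nu_8$ as the ends and the degree-$(q-1)$ characters summing to the exceptional vertex; this is classical ($\SL_2(q)$ has abelian Sylow $\ell$-subgroups here, the block is of cyclic defect, and its Brauer tree is a star/open polygon as recorded in the standard references), so there is no real obstacle — the main care needed is simply to thread the central factor $Z^*$ through the second Fong--Reynolds step correctly, which is harmless because $\ell$ is odd.
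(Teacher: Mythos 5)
Your proof is correct and follows essentially the same route as the paper: iterate the Fong--Reynolds correspondence down the chain of parabolics, at each stage using the $\ell'$-ness of the unipotent radical and the explicit extensions of Section~\ref{subsec:charsparabolic} to identify blocks (preserving defect groups and decomposition matrices) and then reading off the well-known cyclic block structure at the bottom of the chain ($D_{2(q+1)}$ in part (a), $\SL_2(q)\times Z^*$ in part (b)). The paper's own proof is terser---it states the two Fong--Reynolds steps for part (a) and then dismisses part (b) with ``the same arguments''---so your writeup is a legitimate fleshing-out rather than a different method. One small slip in the writeup for part (b): you twice wrote $\rho_1\in\Irr(U_6^*)$ and $I^3=(L^{*'}\times Z^*)\ltimes U_6^*$ where the first Fong--Reynolds step actually concerns $\rho_1\in\Irr(U_4^*)$ and $I^3=(L_4^{*'}\times Z^*)\ltimes U_4^*\subseteq P_4^*$ (with $L_4^{*'}\cong\Sp_2(q)$), since the characters ${}^3\psi^{1,+}_\vartheta$ live in $P_4^*$, not $P_6^*$; your subsequent sentences (``a cyclic block $b^*_4$ of $P_4^*$'', the second step with $\lambda\in\Irr(U_6^*)$ and $I^2=\tilde P_4^*\ltimes U_6^*$) make clear this is a typo, not a gap in the reasoning.
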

\begin{proof}
 We give the proof for the block $b$ in part~(a). 
 Let $\lambda^0$ be the non-trivial
 linear character of $U$ defined in Section~\ref{subsec:charsparabolic}. 
 Analogously to the proof of Lemma~\ref{la:cyclicblocksSO5},
 Fong-Reynolds correspondence gives a bijection between the blocks of
 $P$ covering the block $b^0:=\{\lambda^0\}$ of $U$ and the blocks of
 $P_{5}$, also preserving defect groups and decomposition matrices.
 We may repeat the same arguments for $P_{5}$ and 
 $\lambda^- \in \Irr(U_{5})$ to extend
 this correspondence to $L_5^-$. Again, this bijection is realized via
 the character constructions of Section~\ref{subsec:charsparabolic},
 so the claim follows.

 The existence of the block $b^*$ can be proved by the same arguments
 using the character constructions in~\cite[2.3]{AnHissSteinberg}.
\end{proof}

\subsection{The decomposition numbers of $\SO_5(q)$}
\label{subsec:decSO5}

In this section we apply our overall approach to
the group $G_5 = \SO_5(q)$ to determine the $\ell$-modular
decomposition numbers of the unipotent characters of~$G_5$. 

\begin{theorem} \label{thm:decmatSO5}
For all prime powers $q$ the $\ell$-modular decomposition numbers of
the unipotent characters in the principal block of $\SO_5(q)$ are given in
Table~\ref{tab:decmatSO5}, where the decomposition number $\alpha = 1$ 
if $(q+1)_\ell = 3$, and $\alpha = 2$ if $(q+1)_\ell > 3$.
\end{theorem}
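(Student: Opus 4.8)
The plan is to follow the general strategy announced in the introduction, specialized to the small group $G_5 = \SO_5(q)$, whose principal block for $\ell \mid q+1$ has six unipotent ordinary characters as listed in Table~\ref{tab:labelsSO5}. First I would pin down which of these six characters lie in the principal block: by the Fong--Srinivasan classification the characters whose symbol has defect~$1$ form the principal series, and the character ${[-,-,3]}$ (symbol of defect~$3$) together with any cuspidal constituents must be sorted out; a quick count of $\ell$-cores of symbols shows that for $\ell \mid q+1$ all six unipotent characters fall into the principal $\ell$-block, since $q+1$ divides the relevant cyclotomic factors $\phi_2^2$, $\phi_4$ appearing in their degrees. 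Having identified the block, the basic-set theorem of Geck--Hiss (invoked in Section~\ref{subsec:rels}) guarantees that $\{\breve\chi \mid \chi \text{ unipotent in the block}\}$ is a basic set, so the decomposition matrix is square unitriangular with respect to a suitable order, and it remains to determine the off-diagonal entries.

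Next I would assemble \textbf{lower bounds} from the Brauer character relations of Lemma~\ref{la:relsfromDLchars}(a),(b). Part~(a) gives two relations valid for all odd $q$, expressing $\breve\chi_1$ and $\breve\chi_2$ as explicit $\Z$-combinations of the $\breve{[\alpha,\beta,d]}$; since $\chi_1,\chi_2$ are genuine characters their Brauer characters are non-negative combinations of the irreducible Brauer characters $\varphi_i$, and comparing the two expressions forces several decomposition numbers to be at least~$1$. Part~(b), available when $(q+1)_\ell > 3$, supplies a third relation whose analysis pushes the entry $\alpha$ up to at least~$2$ in that case. Together with the trivial lower bounds coming from Harish-Chandra induction of the ordinary Steinberg character of the Levi $L \cong \SO_3(q) \times \F_q^\times$ and of the cuspidal unipotent character of $L' \cong \SO_3(q)$, these relations should already force every entry of the matrix in Table~\ref{tab:decmatSO5} to be $\ge$ its claimed value, and in particular $\alpha \ge 1$ always, $\alpha \ge 2$ when $(q+1)_\ell > 3$.

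For the matching \textbf{upper bounds} I would use the cyclic block $b_5$ of the parabolic subgroup $P_5$ from Lemma~\ref{la:cyclicblocksSO5}, whose Brauer tree (Table~\ref{tab:Brauertrees}) is a line with exceptional vertex in the middle: its three PIMs are therefore completely explicit, with every decomposition number $0$ or $1$. Harish-Chandra inducing these PIMs from $P_5$ up to $G_5$ produces genuine projective modules of $\Ocal G_5$; decomposing each induced projective in the ordinary basic set and using the fact that the multiplicity of a PIM in a projective module is a non-negative integer yields upper bounds for the columns of the decomposition matrix, and combined with the lower bounds these should be tight, forcing $\alpha = 1$ for $(q+1)_\ell = 3$ and $\alpha = 2$ for $(q+1)_\ell > 3$, with no residual dependence on the precise value of $(q+1)_\ell$ beyond this dichotomy.

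The step I expect to be the main obstacle is matching the induced projectives to the columns of the decomposition matrix precisely enough: Harish-Chandra induction from $P_5$ will in general produce projectives that are sums of several PIMs, so one must argue --- using the modular Harish-Chandra series information (the principal series versus the cuspidal $[B_2,\eta]$ and $[B_2,\St]$ series described in Remark~\ref{rmk:decnosSO7}(d)) and the known submodule structure forced by the line-shaped Brauer tree --- that the relevant PIM occurs with multiplicity exactly one in the induced module, so that its decomposition vector is bounded above by that induced projective's vector. Once the two cuspidal Brauer characters of $L$ are identified (the $\ell$-modular reduction $\eta$ of the cuspidal unipotent character and the modular Steinberg character), Hochster's--Harish-Chandra-theoretic orthogonality and the shape of the tree should determine these multiplicities, and the theorem follows by squeezing the entries between the lower and upper bounds.
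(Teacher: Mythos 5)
Your overall plan --- unitriangularity from explicitly constructed projectives, lower bounds for $\alpha$ from Lemma~\ref{la:relsfromDLchars}(a),(b), and upper bounds from the cyclic block $b_5$ of $P_5$ --- agrees with much of the paper's argument, but there is a genuine gap in the upper bound step, and it is precisely the step the paper flags as its main new ingredient.

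The PIMs of the cyclic block $b_5$ have the form $\xi_1 + \xi_\mathrm{exc}$ and $\xi_2 + \xi_\mathrm{exc}$, where $\xi_\mathrm{exc}$ is a \emph{sum of $m_\mathrm{exc} = \frac{(q+1)_\ell-1}{2}$ exceptional characters}. After inducing to $G_5$ one therefore obtains projective characters whose scalar product with $[-,1^2,1]$ is $\frac{(q+1)_\ell-1}{2}$, not a bounded constant: this is exactly the entry $\langle{[-,1^2,1]}, \Psi_2\rangle_{G_5} = \frac{(q+1)_\ell-1}{2}$ in the paper's Table~\ref{tab:scalSO5}. So the argument you propose yields only $\alpha \le \frac{(q+1)_\ell-1}{2}$, which matches $1$ when $(q+1)_\ell=3$ and $2$ when $(q+1)_\ell=5$, but is strictly weaker than $\alpha\le 2$ once $(q+1)_\ell \ge 7$. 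Your claim that the induced PIMs give $\alpha=2$ ``with no residual dependence on the precise value of $(q+1)_\ell$'' does not hold. The paper closes this gap with a separate method: it applies Dudas' result \cite{Dudas} based on the perfect complex associated to the Deligne--Lusztig variety of $w=w_1w_2$, showing that the projective cover of the modular Steinberg module appears only in degree $\ell(w)$ of that complex, which forces $\alpha\le 2$ independently of $(q+1)_\ell$. Without this (or some substitute), the case $(q+1)_\ell>5$ remains open in your proposal.

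Two smaller points. First, the principal block of $\SO_5(q)$ for $\ell\mid q+1$ contains only \emph{five} of the six unipotent characters; $[1,1,1]$ has degree $\frac12 q\phi_2^2$, so its $\ell$-part equals the full $\ell$-part $(q+1)_\ell^2$ of $|G_5|$, making it a defect-zero character in its own block (this is Remark~\ref{rmk:decnosSO5}(a)). Second, your proposal does not address even $q$: since the parabolic-subgroup constructions and the Deligne--Lusztig relations in Lemma~\ref{la:relsfromDLchars} are carried out in the paper only for odd $q$, the even-$q$ case is instead handled by transporting the known decomposition numbers of $\Sp_4(q)$ through the isomorphism of Remark~\ref{rmk:even_iso_Sp}, and this must be said explicitly.
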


\begin{table}[h]
 \[
 \begin{array}{|c|ccccc|}
  \hline
  & {}^5\varphi_1 & {}^5\varphi_2 & {}^5\varphi_3 & {}^5\varphi_4 & {}^5\varphi_5 \\
   & ps & c & \tilde{A}_1 & A_1 & c\\
  \hline
  {[2,-,1]} & 1 & \multicolumn{1}{:c}{.} & . & . & .
  \\\cdashline{1-1}\cdashline{2-5}
  {[-,-,3]} & . & \multicolumn{1}{:c}{1} & . & \multicolumn{1}{c:}{.} & .\\
  {[1^2,-,1]} & 1 & \multicolumn{1}{:c}{.} & 1 & \multicolumn{1}{c:}{.} & .\\
  {[-,2,1]} & 1 & \multicolumn{1}{:c}{.} & . & \multicolumn{1}{c:}{1} & . 
  \\\cdashline{1-1}\cdashline{3-6}
  {[-,1^2,1]} & 1 & \alpha & 1 & \multicolumn{1}{c:}{1} & 1\\
  \hline
 \end{array}
 \]
 \caption{The $\ell$-modular decomposition numbers of the unipotent
   characters in the principal block of $\SO_5(q)$.}
 \label{tab:decmatSO5}
\end{table}

\begin{remark} \label{rmk:decnosSO5}
\begin{enumerate}
\item[(a)] The character ${[1,1,1]}$ has defect $0$ and is not
  contained in the principal block.

\item[(b)] The set of ordinary unipotent characters of $\SO_5(q)$ is
  partitioned into three families; see the information available in
  the \GAP-part of \CHEVIE. The distribution into these families is
  indicated by the dotted lines in the leftmost column of
  Table~\ref{tab:decmatSO5}. 

\item[(c)] There are five irreducible Brauer characters ${}^5\varphi_1$,
  \dots, ${}^5\varphi_5$ in the principal block of $\SO_5(q)$ and each 
  belongs to a different Harish-Chandra series. We write $ps$ for the
  principal series and $c$ for cuspidal Brauer characters. A Levi
  subgroup corresponding to the short simple root $\alpha_1$ has a
  unique cuspidal unipotent Brauer character, the modular Steinberg
  character, and we denote the corresponding Harish-Chandra series of
  $\SO_5(q)$ by~$A_1$. The same is true for the Levi subgroup
  corresponding to the long simple root $\alpha_2$ and we write
  $\tilde{A}_1$ for the corresponding Harish-Chandra series of $\SO_5(q)$. 

\item[(d)] Theorem~\ref{thm:decmatSO5} confirms~\cite[Conjecture~3.4]{GeckHiss} 
  in the special case of $\SO_5(q)$ for all prime powers $q$.
\end{enumerate}
\end{remark}

\begin{proof}
Suppose that $q$ is odd. We begin by constructing several projective
characters of $G_5$ in the sense that these characters are sums of
the ordinary characters of projective indecomposable $kG_5$-modules (PIMs). 

Let $1_B$ be the trivial $kB$-module of a Borel
subgroup $B$ of the group $G_5$ and $\Phi_{1_B}$ the ordinary character of
a projective cover of $1_B$. We write $\varphi_\St$ for the modular
Steinberg module of a Levi subgroup $L_\mathrm{short}$ corresponding
to the short simple root $\alpha_1$ and let $\Phi_{\varphi_\St}$
be the ordinary character of a projective cover of $\varphi_\St$.
Similarly, we write $\tilde{\varphi}_{\St}$ for the modular Steinberg
module of a Levi subgroup $L_\mathrm{long}$ corresponding to the long
simple root $\alpha_2$ and let $\Phi_{\tilde{\varphi}_\St}$ be the
ordinary character of a projective cover of $\tilde{\varphi}_{\St}$. 
Furthermore, let $\Phi_{\nu_1^-}$ be the ordinary character of a
projective cover of the simple $kP_5$-module with Brauer
character $({^-}\psi_{\nu_1^-})\breve{}$ in the cyclic block $b_5$
of the parabolic subgroup~$P_5$ described in Lemma~\ref{la:cyclicblocksSO5}. 
Hence, in the notation of Lemma~\ref{la:cyclicblocksSO5} we have 
$\Phi_{\nu_1^-} = {^-}\psi_{\nu_1^-} + {^-}\psi_{\Xi}$. We define
characters of $G_5$ as follows:
\[
\Psi_1 := \R{G_5}{B}(\Phi_{1_B}), \quad 
\Psi_2 := \Phi_{\nu_1^-}\smash\uparrow_{P_5}^{G_5}, \quad
\Psi_3 := \R{G_5}{L_\mathrm{long}}(\Phi_{\tilde{\varphi}_{\St}}), \quad 
\Psi_4 := \R{G_5}{L_\mathrm{short}}(\Phi_{\varphi_\St}), 
\]
and let $\Psi_5$ denote the Gelfand-Graev character of $G_5$. The
characters $\Psi_1, \dots, \Psi_4$ are projective since induction 
and Harish-Chandra induction preserve projectives;
see~\cite[Lemma~4.4.3]{HissHabil}. The character $\Psi_5$ is
projective because it is induced from an $\ell'$-subgroup.

The scalar products of $\Psi_1$, \dots, $\Psi_5$ with the ordinary
unipotent characters of $G_5$ are given in Table~\ref{tab:scalSO5} and
can be determined as follows: Since Harish-Chandra induction commutes
with taking unipotent quotients (see~\cite[Lemma~6.1]{HissHC}) it is
straightforward to compute the scalar products of $\Psi_1$, $\Psi_3$,
$\Psi_4$ with the ordinary unipotent characters of $G_5$. The scalar
products of $\Psi_2$ follow from~\cite[Theorem~7.2]{HimNoeResSO} and
Frobenius reciprocity. By ~\cite[Section~12.1]{Carter} the
Steinberg character $\St_{G_5}$ is the only unipotent constituent of
$\Psi_5$ and it has multiplicity one. 

\begin{table}
 \[
 \begin{array}{|c|ccccc|}
  \hline
  & \Psi_1 & \Psi_2 & \Psi_3 & \Psi_4 & \Psi_5 \\
  \hline
  {[2,-,1]} & 1 & . & . & . & .\\
  {[-,-,3]} & . & 1 & . & . & .\\
  {[1^2,-,1]} & 1 & . & 1 & . & .\\
  {[-,2,1]} & 1 & . & . & 1 & . \\
  {[-,1^2,1]} & 1 & \frac{(q+1)_\ell-1}{2} & 1 & 1 & 1\\
  \hline
 \end{array}
 \]
 \caption{Scalar products of the unipotent characters of $G_5$ with
   the projective characters $\Psi_1$, $\Psi_2$, \dots, $\Psi_5$.}
 \label{tab:scalSO5}
\end{table}

From Table~\ref{tab:scalSO5} we get the unitriangular shape of the
decomposition matrix of the principal block $B_0(G_5)$ leading to a
bijection between the set of ordinary unipotent characters and the set
of irreducible unipotent Brauer characters of
$B_0(G_5)$. Let~${}^5\varphi_j$ be the irreducible Brauer character
corresponding to the $j$-th column of Table~\ref{tab:scalSO5} and let
${}^5\Phi_j$ be the ordinary character of its projective cover. In
particular, ${}^5\varphi_5$ is the modular Steinberg character of $G_5$.

The cuspidality of ${}^5\varphi_2$ and ${}^5\varphi_5$ is a consequence of 
\cite[Lemma~4.3]{HissHC} and \cite[Theorem~4.2]{GeckHissMalle}, respectively.
The Harish-Chandra series of ${}^5\varphi_1$, ${}^5\varphi_3$, ${}^5\varphi_4$
follow from the construction of $\Psi_1$, $\Psi_3$, $\Psi_4$. Thus,
all entries of Table~\ref{tab:scalSO5} are decomposition numbers
except for $\langle{[-,1^2,1]}, \Psi_2\rangle_{G_5}$.
Let $\alpha := \langle{[-,1^2,1]}, {}^5\Phi_2\rangle_{G_5}$. 
It follows from Lemma~\ref{la:relsfromDLchars} (a), (b) that 
$\alpha \ge 1$ and that $\alpha \ge 2$ if $(q+1)_\ell > 3$.
Hence, Table~\ref{tab:scalSO5} implies that $\alpha=1$ if
$(q+1)_\ell=3$. 

To prove that $\alpha \le 2$ in general, we use the method of Dudas
detailed in \cite{Dudas}. Let $S$ denote the simple $kG_5$-module
with Brauer character ${}^5\varphi_{5}$. 
Let $w$ be a Weyl group element of minimal length with the property that
the Deligne-Lusztig variety associated to $w$ gives rise to a
virtual module $P_w$ such that with respect to the usual pairing
$\left<{~},{~}\right>$ of simple and projective $kG_5$-modules we have
$\left< P_w,S\right> \neq 0$. Then by \cite[8.10,8.12]{BonnafeRouquier}
there is a bounded perfect complex \[ 0 \rightarrow Q_{\ell(w)}
\rightarrow Q_{\ell(w)+1} \rightarrow \cdots \rightarrow Q_{2\ell(w)}
\rightarrow 0 \] giving the homology with compact support on the
variety, and the projective cover of $S$ appears \textsl{only} in
$Q_{\ell(w)}$. The same holds for the unipotent summand of the complex whose
character is given by the Deligne-Lusztig character $\R{\G}{\T_w}(1)$,
so that we may verify the minimality property of $w$ by taking scalar
products of Deligne-Lusztig characters (which are readily available in the
\GAP-part of \CHEVIE) and 
\[
{}^5\varphi_5 = {[2,-,1]}\,\breve{} - \alpha{[-,-,3]}\,\breve{}
- {[1^2,-,1]}\,\breve{} - {[-,2,1]}\,\breve{}  + {[-,1^2,1]}\,\breve{}.
\]
For $w = w_1w_2$ we obtain the scalar product $2-\alpha$, while elements of
lesser length yield scalar product $0$. We assume that $\alpha \neq 2$
and consider the unipotent summand of the associated complex
$ 0 \rightarrow Q_2 \rightarrow Q_3 \rightarrow Q_4 \rightarrow 0 $
of projective $kG_5$-modules.
Owing to the wedge shape of the decomposition matrix, we see that 
$\R{\G}{\T_w}(1)$ is the unipotent part of 
${}^5\Phi_1 + {}^5\Phi_2 - {}^5\Phi_3 - {}^5\Phi_4 + (2-\alpha) \cdot {}^5\Phi_5$. 
By the above we have the information that $2-\alpha > 0$. Therefore,
$\alpha = 2$ or $\alpha < 2$, hence $\alpha \le 2$. 

Now suppose that $q$ is even. The ordinary character table of 
$\Sp_4(q)$ is available in the \CHEVIE\ library. The character
values on the unipotent classes imply that the isomorphism 
$\SO_5(q) \to \Sp_4(q)$ from Remark~\ref{rmk:even_iso_Sp} maps
each unipotent character with label $\Lambda$ to the unipotent
character with the same label, except for possibly swapping
$[1^2,-,1]$ and $[-,2,1]$. Hence, the decomposition numbers of the
unipotent characters of $G_5 = \SO_5(q)$ can be read off from those of
$\Sp_4(q)$ in~\cite[Theorem~2.2]{WhiteSp4even} and 
\cite[Theorem~2.3]{OkuyamaWaki}. The modular Harish-Chandra series can
be determined by the same arguments as for odd~$q$. 
\end{proof}

\subsection{Approximations}
\label{subsec:approx}

In this section we start with the determination of the decomposition
numbers of the groups $G = \SO_7(q)$ and $G^* = \Sp_6(q)$.
In \cite{AnHissSteinberg} an approximation of the decomposition matrix of
$G^*$ is constructed for odd $q$. Here we mainly follow the same
approach and show that similar arguments also provide an approximation
of the decomposition matrix of $G$ and $G^*$.

At the starting point of this construction lie the decomposition
matrices of the groups $G_5 = \SO_5(q)$ and $G^*_4 = \Sp_4(q)$; see
Theorem~\ref{thm:decmatSO5}, \cite[Theorem~4.2]{WhiteSp4odd}, 
\cite[Theorems 2.2, 5.1]{WhiteSp4even}, \cite[Theorem~2.3]{OkuyamaWaki}. 
The PIMs thus provided are a good source of projectives for $G$ and $G^*$
via Harish-Chandra induction. Further projective modules for $G$ and
$G^*$ can be obtained by inducing PIMs of the cyclic blocks of $P$ and
$P^*$ established in Lemma~\ref{la:cyclicblocks}.

\begin{lemma} \label{la:indproj}
We use the notation of Lemma~\ref{la:cyclicblocks} and set
$m_\mathrm{exp} := \frac{(q+1)_\ell-1}{2}$. Let $b$ and $b^*$ be the
cyclic blocks of $P$ and $P^*$, respectively, described in 
Lemma~\ref{la:cyclicblocks}. When dealing with the block $b^*$ we
assume that $q$ is odd. Let $\Phi_{\xi_j}$ be the ordinary 
character of a projective cover of the simple $kP^{(*)}$-module in
the block $b^{(*)}$ with Brauer character~$\breve{\xi}_j$ for $j=1,2$. 
Then the unipotent parts of $\Phi_{\xi_j}\smash\uparrow_{P^{(*)}}^{G^{(*)}}$ 
are 
\[
\begin{array}{ccccc}
  (\Phi_{\xi_1}\smash\uparrow_{P^{(*)}}^{G^{(*)}})_\mathrm{uni} & = &
  [1^3,-,1] & + & m_\mathrm{exp} \cdot [-,1^3,1], \\
  (\Phi_{\xi_2}\smash\uparrow_{P^{(*)}}^{G^{(*)}})_\mathrm{uni} & = & [-,1,3]
  & + & m_\mathrm{exp} \cdot [-,1^3,1].
\end{array}
\]
\end{lemma}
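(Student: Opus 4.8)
The plan is to compute the unipotent part of the Harish-Chandra/ordinary induction $\Phi_{\xi_j}\smash\uparrow_{P^{(*)}}^{G^{(*)}}$ by splitting it into two contributions: the ordinary character $\xi_j$ itself and the exceptional character $\xi_{\mathrm{exc}}$, using the fact that $\Phi_{\xi_j} = \xi_j + \xi_{\mathrm{exc}}$ by the Brauer tree description in Lemma~\ref{la:cyclicblocks} (Table~\ref{tab:Brauertrees}). So it suffices to know the unipotent constituents of $\xi_1\smash\uparrow_{P^{(*)}}^{G^{(*)}}$, $\xi_2\smash\uparrow_{P^{(*)}}^{G^{(*)}}$, and $\xi_{\mathrm{exc}}\smash\uparrow_{P^{(*)}}^{G^{(*)}}$, and then add them up.

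First I would work out the scalar products $\langle \xi_j\smash\uparrow_{P^{(*)}}^{G^{(*)}}, \chi_\Lambda\rangle_{G^{(*)}}$ for each unipotent character $\chi_\Lambda$ by Frobenius reciprocity, i.e.\ by computing $\langle \xi_j, \chi_\Lambda\smash\downarrow^{G^{(*)}}_{P^{(*)}}\rangle_{P^{(*)}}$. For the orthogonal group this is exactly the content of the restriction computations in~\cite[Theorem~7.2]{HimNoeResSO}, which gives the decomposition of the restrictions of the unipotent characters of $\SO_7(q)$ to $P=P_7$ in terms of the parabolically-constructed characters ${}^0\psi_{(\cdot)}$, ${}^{\pm}\psi_{(\cdot)}$, ${}^1\psi_{(\cdot)}$; since $\xi_1,\xi_2,\xi_{\mathrm{exc}}$ are all of the form ${}^0\psi_{({}^-\psi_{(\cdot)})}$, one reads off the relevant multiplicities directly. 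For the symplectic group $P^*=P_6^*$ one uses the analogous restriction data in~\cite{AnHissSteinberg} (and~\cite{AnHissUnipotent} for the shape of $\rho_1,\rho_2$ and the ${}^2\psi$, ${}^3\psi$ constructions). The key numerical input is that the exceptional vertex of the Brauer tree carries $\Xi$, the sum of the $m_{\mathrm{exp}} := \frac{(q+1)_\ell - 1}{2}$ irreducible characters of degree $2$ (resp.\ $q-1$) in the relevant block of $\GO_2^-(q)$ (resp.\ $\SL_2(q)$); this is precisely where the multiplicity $m_{\mathrm{exp}}$ in front of $[-,1^3,1]$ comes from.

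The main obstacle will be the bookkeeping that identifies which unipotent character of $G^{(*)}$ each parabolically-induced constituent lands on — i.e.\ matching the labels ${}^0\psi_{({}^-\psi_{\Xi})}$ etc.\ against the symbols/bipartitions in Table~\ref{tab:labelsSO7}, and checking that only $[1^3,-,1]$, $[-,1,3]$ and $[-,1^3,1]$ (and no other unipotent characters) appear. This is a degree/Harish-Chandra-series consistency check: one verifies that the degrees on both sides match (using $|P^{(*)}|$, the degrees of $\xi_1,\xi_2,\xi_{\mathrm{exc}}$, and the degrees in Table~\ref{tab:labelsSO7}), and that the cuspidal support of $\xi_j$ forces the induced constituents into the Harish-Chandra series of $[1^3,-,1]$, $[-,1,3]$ and the Steinberg-type character $[-,1^3,1]$. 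Finally, since for even $q$ the block $b^*$ is not available (and we assume $q$ odd when dealing with $b^*$), for the orthogonal side with even $q$ one invokes Remark~\ref{rmk:even_iso_Sp}, or simply notes that the restriction data of~\cite{HimNoeResSO} is stated for all $q$, so the computation for $b$ goes through uniformly. Adding the two contributions $\xi_j + \xi_{\mathrm{exc}}$ then yields the two displayed identities, with the coefficient of $[-,1^3,1]$ equal to $0 + m_{\mathrm{exp}} = m_{\mathrm{exp}}$ in each case.
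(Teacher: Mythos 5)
The proposal is correct and matches the paper's approach in substance: the paper simply cites the restriction/induction results in \cite{HimNoeResSO} (Theorem~8.1, not~7.2 — 7.2 handles the $P_5 \le \SO_5(q)$ case used earlier, 8.1 the $P_7 \le \SO_7(q)$ case needed here) and in \cite{AnHissSteinberg}, \cite{AnHissUnipotent}, which do precisely the Frobenius-reciprocity computation you outline, and the exceptional multiplicity $m_{\mathrm{exp}}$ does arise exactly from $\Xi$ being a sum of $m_{\mathrm{exp}}$ irreducibles. One small caveat: your appeal to ``cuspidal support'' to constrain the constituents is not quite the right tool, since $\xi_j$ and $\xi_{\mathrm{exc}}$ are characters of $P^{(*)}$ not inflated from the Levi and $\Phi_{\xi_j}\smash\uparrow_{P^{(*)}}^{G^{(*)}}$ is ordinary (not Harish-Chandra) induction; the cited restriction theorems themselves give the full list of unipotent constituents, making that step unnecessary.
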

\begin{proof}
For the block $b$ this follows from~\cite[Theorem~8.1]{HimNoeResSO}; for 
$b^*$ it follows from \cite[Table~4]{AnHissUnipotent}, 
\cite[Corollary~3.3 and Example~3.5]{AnHissSteinberg}.
\end{proof}

The following lemma proves all modular Harish-Chandra series and
all decomposition numbers in Table~\ref{tab:decmatSO7Sp6} except for
$\beta$ and $\gamma$. The value of $\beta$ and $\gamma$ will be
determined in Section~\ref{subsec:proof}.

\begin{lemma} \label{la:approxdecmat}
For all prime powers $q$ the parts of the $\ell$-modular decomposition
matrices of $G$ and $G^*$ corresponding to the unipotent characters in
the principal block are given by Table~\ref{tab:decmatSO7Sp6}. The
same is true for the Harish-Chandra series of the irreducible Brauer
characters in the principal block of $G$ and $G^*$. The decomposition
numbers $\alpha$, $\beta$, $\gamma$ satisfy the following conditions:
\begin{enumerate}
 \item[(a)] $\alpha = 1$ if $(q+1)_\ell=3$, and $\alpha = 2$ if
   $(q+1)_\ell > 3$. 

 \item[(b)] $1\le \beta, \gamma \le \frac{1}{2}( (q+1)_\ell -1 )$.
  Furthermore, if
  $(q+1)_\ell > 3$ then $\beta, \gamma \ge 2$, and if $(q+1)_\ell > 5$ then
  $\beta \ge 3$.
\end{enumerate}
\end{lemma}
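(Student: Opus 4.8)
The plan is to build this approximation in the same way as the proof of Theorem~\ref{thm:decmatSO5}, but now for $G=\SO_7(q)$ and $G^*=\Sp_6(q)$, obtaining a unitriangular lower bound for the decomposition matrix from a family of projective characters, and then sharpening a few entries by means of the Brauer character relations in Lemma~\ref{la:relsfromDLchars}. First I would assemble a supply of projective characters: Harish-Chandra induction of PIMs of the Levi subgroups $L^{(*)}\cong\SO_5(q)\times\F_q^\times$ (resp.\ $\Sp_4(q)\times\F_q^\times$) and of the rank-one Levis, using Theorem~\ref{thm:decmatSO5} and the known decomposition matrix of $\Sp_4(q)$ (\cite{WhiteSp4odd,WhiteSp4even,OkuyamaWaki}); induction of the PIMs of the cyclic blocks $b$, $b^*$ of the parabolics $P$, $P^*$ from Lemma~\ref{la:cyclicblocks}, whose unipotent parts are computed in Lemma~\ref{la:indproj}; and the Gelfand--Graev character, which picks out the Steinberg character with multiplicity one. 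Since induction, Harish-Chandra induction, and induction from $\ell'$-subgroups all preserve projectivity (\cite[Lemma~4.4.3]{HissHabil}), these are genuine projective characters. Their scalar products with the twelve ordinary unipotent characters are computed using that Harish-Chandra induction commutes with taking unipotent quotients (\cite[Lemma~6.1]{HissHC}), together with \cite[Theorem~7.2, Theorem~8.1]{HimNoeResSO} for $G$ and \cite[Table~4]{AnHissUnipotent}, \cite[Corollary~3.3, Example~3.5]{AnHissSteinberg} for $G^*$.

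Next I would arrange these scalar-product vectors so that, after discarding the two characters ${[21,-,1]}$ and ${[-,21,1]}$ lying in a cyclic block (Remark~\ref{rmk:decnosSO7}(c)), the resulting $10\times 10$ matrix is unitriangular with respect to the ordering of rows and columns in Table~\ref{tab:decmatSO7Sp6}. Unitriangularity immediately yields a bijection between the ordinary unipotent characters and the irreducible Brauer characters $\varphi_1,\dots,\varphi_{10}$ of the principal block, and it exhibits the listed entries as lower bounds for the corresponding decomposition numbers. To see that all entries except $\beta$ and $\gamma$ are in fact equal to the tabulated values, I would observe that for each such column the projective character realizing it is (up to the unitriangular correction) the projective cover itself: for $\varphi_1$ this is $\R{G}{}(\Phi_{1_B})$, for the Steinberg column $\varphi_{10}$ the Gelfand--Graev character, for $\varphi_8$ and $\varphi_9$ the modules induced from the cyclic blocks $b$, $b^{*}$ (Lemma~\ref{la:indproj}, where $m_{\mathrm{exp}}=\frac{(q+1)_\ell-1}{2}$ bounds those parabolically induced contributions), and for the Harish-Chandra induced columns $\varphi_2,\dots,\varphi_7$ the scalar products are forced by the $\SO_5(q)$/$\Sp_4(q)$ data. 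The modular Harish-Chandra series are read off from the construction: $ps$ from Harish-Chandra induction from the Borel, the cuspidal label $c$ from \cite[Lemma~4.3]{HissHC} and the Steinberg case from \cite[Theorem~4.2]{GeckHissMalle}, and the series $A_1$, $A_1'$, $A_1\times A_1'$, $[B_2,\eta]$, $[B_2,\St]$ from the Levi subgroups whose PIMs were induced, using the two cuspidal unipotent Brauer characters of $L^{(*)}$ identified via Theorem~\ref{thm:decmatSO5}.

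For part (a), the value of $\alpha$ is inherited directly from the $\SO_5(q)$ computation: $\alpha=\langle{[-,1^2,1]}\smash\downarrow^{G}_{L'},{}^5\Phi_2\rangle$ is the Harish-Chandra restriction of the corresponding $\SO_5(q)$ entry, so Theorem~\ref{thm:decmatSO5} gives $\alpha=1$ for $(q+1)_\ell=3$ and $\alpha=2$ otherwise; the same lower bounds also follow directly from Lemma~\ref{la:relsfromDLchars}(c) (the relations for $\breve\chi_{9,3}$ and $\breve\chi_{13,4}$). For part (b), the upper bound $\beta,\gamma\le\frac12((q+1)_\ell-1)=m_{\mathrm{exp}}$ comes from Lemma~\ref{la:indproj}: the projective characters $\Phi_{\xi_1}\smash\uparrow^{G^{(*)}}_{P^{(*)}}$ and $\Phi_{\xi_2}\smash\uparrow^{G^{(*)}}_{P^{(*)}}$ have unipotent parts $[1^3,-,1]+m_{\mathrm{exp}}\cdot[-,1^3,1]$ and $[-,1,3]+m_{\mathrm{exp}}\cdot[-,1^3,1]$ respectively, and since these are projective, the multiplicity of $[-,1^3,1]$ in the projective cover of $\varphi_8$ (resp.\ $\varphi_9$) — which, by the wedge shape already established, equals $\beta$ (resp.\ $\gamma$) — cannot exceed $m_{\mathrm{exp}}$. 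The lower bounds $\beta,\gamma\ge 2$ for $(q+1)_\ell>3$ and $\beta\ge 3$ for $(q+1)_\ell>5$ are extracted from the Brauer character relations in Lemma~\ref{la:relsfromDLchars}(c)--(e): one expresses $\varphi_8$ and $\varphi_9$ in terms of the basic set using the wedge shape, pairs the relevant $\breve\chi_{9,i}$, $\breve\chi_{23,j}$, $\breve\chi_{32}$ against these, and reads off that the coefficient of $[-,1^3,1]$ forces the stated inequalities. The main obstacle I anticipate is purely bookkeeping: verifying that the chosen projectives really do produce a matrix that is unitriangular in the precise row/column order of Table~\ref{tab:decmatSO7Sp6} (so that the identification with $\varphi_1,\dots,\varphi_{10}$ is unambiguous and the off-diagonal lower bounds are pinned down), and correctly tracking which scalar products are already exact versus which remain as the genuine ambiguities $\beta$, $\gamma$ to be resolved later in Section~\ref{subsec:proof}. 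The even-$q$ case is handled, as in Theorem~\ref{thm:decmatSO5}, by transporting everything through the isomorphism $\SO_7(q)\cong\Sp_6(q)$ of Remark~\ref{rmk:even_iso_Sp} and appealing to \cite{WhiteSp6even}.
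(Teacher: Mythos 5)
Your proposal follows the paper's strategy closely: the same family of projective characters (Harish--Chandra induced PIMs, inductions from the cyclic blocks of $P$, $P^*$, the Gelfand--Graev character), the same unitriangularity argument, the same sources for the upper bound $m_{\mathrm{exp}}$ on $\beta,\gamma$ and for the lower bounds from Lemma~\ref{la:relsfromDLchars}, and the same transport to even~$q$ via Remark~\ref{rmk:even_iso_Sp}. However, one concrete ingredient is missing, and another step is materially understated.

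First, the paper needs an extra projective character not on your list to pin down the first column: the permutation character $1_P\smash\uparrow_P^G$, whose restriction to the principal block is $[3,-,1]+[2,1,1]$, combined with $\ell\mid[G:P]$, yields $\langle[2,1,1],\Phi_1\rangle_G=1$. This step is essential for both parities (for even $q$ the data in \cite{WhiteSp6even} only give $\langle[2,1,1],\Phi_1\rangle_G\le 1$), and it does not follow from the $\Psi_j$ you constructed. Second, showing that each $\Psi_j$ agrees with $\Phi_j$ in the relevant entries is not ``purely bookkeeping'': the paper has to argue column by column, using the cuspidality of $\varphi_{10}$, the fact that $\varphi_4$ and $\varphi_7$ are the unique Brauer characters in the series $[B_2,\eta]$ and $[B_2,\St]$, and specific relations $\breve{\chi}_{9,2}$, $\breve{\chi}_{9,3}$, $\breve{\chi}_{13,3}$ from Lemma~\ref{la:relsfromDLchars}. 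Relatedly, your identity for $\alpha$ is not quite what is proved: adjunction gives $\langle[1,1^2,1],\Psi_4\rangle_G=\alpha$, which a priori is only an upper bound for $\langle[1,1^2,1],\Phi_4\rangle_G$; the equality requires the modular Harish--Chandra series argument (and the coefficient $\alpha$ at $[-,1^3,1]$ requires $\breve{\chi}_{9,3}$). Finally, for even~$q$ you cannot simply cite \cite{WhiteSp6even}: that source leaves $\beta$, $\gamma$ bounded only below, so the induction of the PIMs of the cyclic block $b$ of $P_7$ (Lemma~\ref{la:cyclicblocks}, valid for all prime powers) is still needed to obtain the upper bound $m_{\mathrm{exp}}$.
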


\begin{proof}
Suppose that $q$ is odd. The arguments for $G$ and $G^*$ are entirely
similar, allowing us to just state them for $G$. We start by
constructing several projective characters of $G$. 

The Levi subgroup $L$ of $P$ is isomorphic to 
$A \times G_5 \cong \F_q^\times \times \SO_5(q)$, hence we obtain PIMs of 
$L$ by inflating the PIMs of $G_5$. For $j=1,\dots,5$, we denote the
inflated PIM corresponding to the $j$-th column of
Table~\ref{tab:decmatSO5} by ${}^5\Phi_j$. Additionally, we set 
${}^5\Phi_6 := \infl_{G_5}^L([1,1,1])$. Let 
$L_{1,3} \cong G_3 \times \GL_2(q)$ be the standard Levi subgroup
corresponding to the set $\{\alpha_1, \alpha_3\}$ of simple roots and
let $\Phi_{\varphi_\St \boxtimes \varphi_\St'}$ be the ordinary character of a
projective cover of its modular Steinberg module. 

For $j=1,2$, let $\Phi_{\xi_j}$ be the ordinary character of a
projective cover of the simple $kP$-module with Brauer character
$\breve{\xi}_j$ in the cyclic block $b$ of~$P$ described in
Lemma~\ref{la:cyclicblocks}. Hence, in the notation of 
Lemma~\ref{la:cyclicblocks}~(a) we have: 
\[
\Phi_{\xi_1} = {^0}\psi_{({^-}\psi_{1_{L^{-}_5}})} + {^0}\psi_{({^-}\psi_{\Xi})} \quad
\text{and} \quad
\Phi_{\xi_2} = {^0}\psi_{({^-}\psi_{\nu_1^-})} + {^0}\psi_{({^-}\psi_{\Xi})}.
\]
We define characters of $G$ as follows:
\begin{align*}
\begin{split}
\Psi_1 &:= \R{G}{L}({}^5\Phi_1), \,
\Psi_2 := \R{G}{L}({}^5\Phi_6), \,
\Psi_3 := \R{G}{L}({}^5\Phi_4), \,
\Psi_4 := \R{G}{L}({}^5\Phi_2), \, \\
\Psi_5 &:= \R{G}{L_{1,3}}(\Phi_{\varphi_\St \boxtimes \varphi_\St'}), \,
\Psi_6 := \R{G}{L}({}^5\Phi_3), \,
\Psi_7 := \R{G}{L}({}^5\Phi_5), \, \Psi_8 := \Phi_{\xi_1}\smash\uparrow_P^G, \,
\Psi_9 := \Phi_{\xi_2}\smash\uparrow_P^G,
\end{split}
\end{align*}
and let $\Psi_{10}$ denote the Gelfand-Graev character of $G$. The
characters $\Psi_1, \dots, \Psi_{10}$ are projective since induction 
and Harish-Chandra induction preserve projectives;
see~\cite[Lemma~4.4.3]{HissHabil}. The character $\Psi_{10}$ is
projective because it is induced from an $\ell'$-subgroup.

The scalar products of $\Psi_1, \Psi_2, \dots, \Psi_{10}$ with the ordinary
unipotent characters of $G$ are given in Table~\ref{tab:scalSO7} and
can be determined as follows: Since Harish-Chandra induction commutes
with taking unipotent quotients (see~\cite[Lemma~6.1]{HissHC}) it is
straightforward to compute the scalar products of $\Psi_1, \Psi_2,
\dots, \Psi_7$ with the ordinary unipotent characters of $G$ with the
help of \CHEVIE. The scalar products of $\Psi_8$ and $\Psi_9$ follow
from Lemma~\ref{la:indproj}. By~\cite[Section~12.1]{Carter} the
Steinberg character $\St_G$ is the only unipotent constituent of
$\Psi_{10}$ and it has multiplicity one.  

\begin{table}
 \[
 \begin{array}{|c|cccccccccc|}
  \hline
  & \Psi_1 & \Psi_2 & \Psi_3 & \Psi_4 & \Psi_5 & \Psi_6 & \Psi_7 & \Psi_8 & \Psi_9 & \Psi_{10} \\
  \hline
  {[3,-,1]}   & 1 & . & . & . & . & . & . & . & . & . \\
  {[2,1,1]}   & 1 & 1 & . & . & . & . & . & . & . & . \\
  {[-,3,1]}   & 1 & . & 1 & . & . & . & . & . & . & . \\
  {[1,-,3]}   & . & . & . & 1 & . & . & . & . & . & . \\
  {[1,2,1]}   & 1 & 1 & 1 & . & 1 & . & . & . & . & . \\
  {[1^2,1,1]} & 1 & 1 & . & . & 1 & 1 & . & . & . & . \\
  {[1,1^2,1]} & 1 & 1 & 1 & \alpha & 1 & 1 & 1 & . & . & . \\
  {[1^3,-,1]} & 1 & . & . & . & . & 1 & . & 1 & . & . \\
  {[-,1,3]}   & . & . & . & 1 & . & . & . & . & 1 & . \\
  {[-,1^3,1]} & 1 & . & 1 & \alpha & 1 & 1 & 1 & \frac{(q+1)_\ell-1}{2} & \frac{(q+1)_\ell-1}{2} & 1 \\
  \hline
 \end{array}
 \]
 \caption{Scalar products of the unipotent characters of $G$ with
   the projective characters $\Psi_1$, $\Psi_2$, \dots, $\Psi_{10}$.}
 \label{tab:scalSO7}
\end{table}

From Table~\ref{tab:scalSO7} we get the unitriangular shape of the
decomposition matrix of the principal block $B_0(G)$ leading to a
bijection between the set of ordinary unipotent characters and the set
of irreducible unipotent Brauer characters of
$B_0(G)$. Let~$\varphi_j$ be the irreducible Brauer character
corresponding to the $j$-th column of Table~\ref{tab:scalSO7} and let
$\Phi_j$ be the ordinary character of its projective cover. In
particular, $\varphi_{10}$ is the modular Steinberg character of $G$.

The cuspidality of $\varphi_{10}$ is a consequence of 
\cite[Theorem~4.2]{GeckHissMalle}. We get the lower bounds for $\beta$
and $\gamma$ from the relations involving $\breve{\chi}_{9,3}$, 
$\breve{\chi}_{13,4}$, $\breve{\chi}_{23,2}$, $\breve{\chi}_{32}$ in
Lemma~\ref{la:relsfromDLchars}. Since $\Psi_7$ is Harish-Chandra
induced from a proper Levi subgroup and $\varphi_{10}$ is cuspidal we
get that $\langle{[-,1^3,1]}, \Phi_7\rangle_G = 1$. This proves all
statements about $\Phi_7, \Phi_8, \Phi_9, \Phi_{10}$ in
Lemma~\ref{la:approxdecmat}. Furthermore, we see that $\varphi_7$ is
the only irreducible Brauer character in the Harish-Chandra
series $[B_2,\St]$.
The construction of $\Psi_6$ and the fact that $\varphi_7$ belongs to
the Harish-Chandra series $[B_2,\St]$ imply that 
$\langle{[1,1^2,1]}, \Phi_6\rangle_G = 1$. From the relation involving
$\breve{\chi}_{9,2}$ in Lemma~\ref{la:relsfromDLchars} we get that 
$\langle{[1^3,-,1]}, \Phi_6\rangle_G = 1$. Again, the cuspidality of
$\varphi_{10}$ tells us that $\langle{[-,1^3,1]}, \Phi_6\rangle_G = 1$,
proving all statements about $\Phi_6$ and about the Harish-Chandra
series of $\varphi_6$. 

The construction of $\Psi_5$ and the Harish-Chandra series
of $\varphi_7$ and $\varphi_{10}$ imply all statements about
$\Phi_5$ in Lemma~\ref{la:approxdecmat}. Additionally, we see that 
$\varphi_5$ is the only irreducible Brauer character in the 
Harish-Chandra series $A_1 \times A_1'$.
From the modular Harish-Chandra series we get 
$\langle{[1,1^2,1]}, \Phi_4\rangle_G = \alpha$. Then the relation
involving $\breve{\chi}_{9,3}$ in Lemma~\ref{la:relsfromDLchars}
implies that $\langle{[-,1^3,1]}, \Phi_4\rangle_G = \alpha$. The lower
bound for $\gamma$ gives us $\langle{[-,1,3]}, \Phi_4\rangle_G = 1$, 
proving all statements about~$\Phi_4$. By the construction of
$\Psi_4$, the character $\varphi_4$ is the only irreducible Brauer
character in the Harish-Chandra series $[B_2,\eta]$. Again
from our knowledge of the modular Harish-Chandra series we get all
statements about $\Phi_3$, $\Phi_2$ and the Harish-Chandra series of
$\varphi_3$. 

The permutation character $1_P\smash\uparrow_P^G$ restricts to the
principal block as $[3,-,1]+[2,1,1]$. As~$\ell$ divides $[G:P]$, 
there is a trivial constituent in the $\ell$-modular reduction
$[2,1,1]\breve{~}$ so that $\langle{[2,1,1]}, \Phi_1\rangle_G = 1$.
Obviously, the trivial Brauer character $\varphi_1$ belongs to the
principal series. Now the modular Harish-Chandra series and the
relation involving $\breve{\chi}_{13,3}$ in Lemma~\ref{la:relsfromDLchars}
imply all statements about $\Phi_1$. Considering the character 
$\R{G}{T}(1_T)$ where $T := \T^F$ we see that the Harish-Chandra
series of $\varphi_2$, $\varphi_8$ and $\varphi_9$ are as
claimed. This completes the proof of Lemma~\ref{la:approxdecmat} for odd $q$.

Suppose that $q$ is even. The ordinary character table of 
$\Sp_6(q)$ is available in the \CHEVIE\ library. The character
values on the unipotent classes imply that the isomorphism 
$\SO_7(q) \to \Sp_6(q)$ from Remark~\ref{rmk:even_iso_Sp} maps
each unipotent character with label $\Lambda$ to the unipotent
character with the same label. Hence, the decomposition numbers of the
unipotent characters of $G = \SO_7(q)$ can be read off from those of
$\Sp_6(q)$ in~\cite[Theorem~2.2]{WhiteSp6even} except for the upper
bounds on $\beta$ and $\gamma$ in Lemma~\ref{la:approxdecmat} and we
only get $\langle{[2,1,1]}, \Phi_1\rangle_G \le 1$. In the same
way as for odd $q$ we get $\langle{[2,1,1]}, \Phi_1\rangle_G = 1$. 
By inducing the two PIMs in the cyclic block $b$ of $P$ from
Lemma~\ref{la:cyclicblocks} (b) we obtain the upper bounds for $\beta$
and $\gamma$. The modular Harish-Chandra series then follow by the
same arguments as for odd~$q$. 
\end{proof}

\subsection{Proof of Theorem~\ref{thm:decnumbers}}\label{subsec:proof}

In this final section we combine the information we have derived
previously and complete the proof of the decomposition numbers of $G=\SO_7(q)$
and $G^*=\Sp_6(q)$ given in
Theorem~\ref{thm:decnumbers}. According to Lemma~\ref{la:approxdecmat}
we only have to settle the ambiguities $\beta$ and $\gamma$. For
$(q+1)_\ell>5$ we employ again Dudas' method of \cite{Dudas} to show
that the lower bounds in Theorem~\ref{thm:decnumbers} are tight. If 
$(q+1)_\ell \in \{3, 5\}$, then Lemma~\ref{la:approxdecmat} already 
yields sufficient bounds. In this whole section, $q$ is an odd or
even prime power. 

\begin{lemma}
 \label{la:decnosgeneric}
 We have $\gamma\le2$ and $\beta\le\gamma+1$.
\end{lemma}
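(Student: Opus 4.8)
The plan is to reduce at once to the case $(q+1)_\ell>5$ and to settle that case by Dudas' method, exactly as in the proof of Theorem~\ref{thm:decmatSO5}. If $(q+1)_\ell\le5$ then $m_\mathrm{exp}=\frac{(q+1)_\ell-1}{2}\le2$, so Lemma~\ref{la:approxdecmat}(b) already gives $\gamma\le m_\mathrm{exp}\le2$ and $\beta\le m_\mathrm{exp}\le2\le\gamma+1$ (using $\gamma\ge1$). So I assume from now on that $(q+1)_\ell>5$; then Lemma~\ref{la:approxdecmat}(b) furnishes $\beta\ge3$ and $\gamma\ge2$, and it suffices to prove $\gamma\le2$ and $\beta\le\gamma+1$. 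The arguments for $G=\SO_7(q)$ and $G^*=\Sp_6(q)$ are completely parallel (using the root datum of type $B_3$, resp.\ $C_3$, and the identification of the Weyl groups by $\delta$), so I write $H$ for either group.

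By Section~\ref{subsec:charsSpSO} and Lemma~\ref{la:approxdecmat} the ordinary Steinberg character $[-,1^3,1]$ is the ordinary character $\Phi_{10}$ of the projective cover of the modular Steinberg module $S$ of $H$. For $w\in\W$ let $C_w$ be the bounded complex of projective $kH$-modules attached to the Deligne--Lusztig variety $X(w)$, concentrated in homological degrees $\ell(w),\dots,2\ell(w)$, whose alternating sum has unipotent part $\R{H}{\T_w}(1)$. Let $\ell_0$ be the least length of an element $w$ for which $\Phi_{10}$ is a direct summand of some term of $C_w$. By \cite[8.10, 8.12]{BonnafeRouquier}, if $\ell(w)=\ell_0$ then $\Phi_{10}$ occurs in $C_w$ only in homological degree $\ell_0$, whereas for $\ell(w)<\ell_0$ it does not occur at all. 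Writing $\R{H}{\T_w}(1)=\sum_{j=1}^{10}c_j(w)\,\Phi_j$ for the expansion in the basis formed by the ordinary characters of the PIMs, which is well defined by the unitriangularity of the decomposition matrix of Lemma~\ref{la:approxdecmat}, this yields
\[
(-1)^{\ell_0}\,c_{10}(w)\ \ge\ 0\qquad\text{for every }w\in\W\text{ with }\ell(w)\le\ell_0 .
\]

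I would then compute. Inverting the decomposition matrix of Lemma~\ref{la:approxdecmat} gives, with $r_i(w):=\langle\R{H}{\T_w}(1),\chi_i\rangle$,
\[
c_{10}(w)=r_{10}(w)-r_7(w)-r_1(w)+r_2(w)-\beta\bigl(r_8(w)+r_5(w)-r_6(w)-r_3(w)\bigr)-\gamma\bigl(r_9(w)-r_4(w)\bigr),
\]
an affine-linear expression in $\beta,\gamma$ (with $\alpha$ known by Lemma~\ref{la:approxdecmat}(a)) all of whose coefficients $r_i(w)$ are available in the \GAP-part of \CHEVIE. A finite search then locates an even integer $L$ and elements $w,w'\in\W$ of length $L$ such that $r_8+r_5-r_6-r_3$, $r_9-r_4$ and $r_{10}-r_7-r_1+r_2$ all vanish at every element of length $<L$ --- so that $c_{10}(v)=0$ there and $\ell_0\ge L$ --- while $c_{10}(w)=2-\gamma$ and $c_{10}(w')=\gamma+1-\beta$. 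If $\ell_0>L$ then $\Phi_{10}$ does not occur in $C_w$ or $C_{w'}$, forcing $2-\gamma=0$ and $\gamma+1-\beta=0$; if $\ell_0=L$, then the displayed inequality with $\ell_0=L$ even gives $2-\gamma\ge0$ and $\gamma+1-\beta\ge0$. In either case $\gamma\le2$ and $\beta\le\gamma+1$.

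The hard part is the computational core of the last step: pinning down the two Weyl group elements $w$ and $w'$ of length $L$, confirming that $\ell_0$ is even, and checking that for every shorter $v$ the three scalar-product expressions above vanish on the nose (not merely their $\beta$-$\gamma$-free parts), so that no hidden relation between $\beta$ and $\gamma$ is introduced; the numerics themselves amount to a bounded \CHEVIE\ calculation of scalar products of Deligne--Lusztig characters, entirely analogous to the one carried out for $\SO_5(q)$.
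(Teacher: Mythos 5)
Your framework is exactly the paper's: invoke Dudas' method via the Bonnaf\'e--Rouquier complex, use that the projective cover $\Phi_{10}$ of the modular Steinberg module appears only in homological degree $\ell(w)$ when $\ell(w)=\ell_0$ is minimal with this appearance, so that $(-1)^{\ell_0}c_{10}(w)\ge0$, and locate suitable Weyl elements by a \CHEVIE\ search. Your matrix inversion giving $c_{10}(w)=r_{10}-r_7-r_1+r_2+\beta\,(r_3-r_5+r_6-r_8)+\gamma\,(r_4-r_9)$ is correct (note $\alpha$ actually cancels and does not appear). The initial reduction to $(q+1)_\ell>5$ is harmless but not needed; the paper's argument is uniform in $\ell$.

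The gap is that the computational core --- which you explicitly leave undone --- is the entire content of the lemma, and your guesses about its outcome are wrong. A finite search does \emph{not} produce an even $L$ with two elements of that common length and coefficients $2-\gamma$ and $\gamma+1-\beta$. The paper uses $w'=s_1s_2s_3$ of length $3$ and $w''=s_1s_2s_1s_2s_3$ of length $5$, both odd and of \emph{different} lengths, with $c_{10}(w')=2(\gamma-2)$ and $c_{10}(w'')=2(\beta-\gamma-1)$, each off from your ansatz by a factor of $-2$. Because the relevant lengths are odd, the sign in $(-1)^{\ell_0}c_{10}\ge0$ flips and compensates, so the framework still delivers $\gamma\le2$ and $\beta\le\gamma+1$; but the step ``confirming that $\ell_0$ is even'' is one you could never complete, and your requirement that both witnesses share a single length $L$ at which all three $\beta$-, $\gamma$- and constant-parts vanish below $L$ is not met (indeed $c_{10}(w')=2(\gamma-2)$ does not vanish identically below length $5$). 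As written, the proposal is an outline of the right approach with incorrect predictions about the data, not a proof.
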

\begin{proof}
 Our approach follows the proof of Theorem~\ref{thm:decmatSO5}. We give the
 argument for the group $G$; the proof for $G^*$ may be copied verbatim.
 For a given element $w$ of the Weyl group $\W$ of $\G$, we consider the
 Deligne-Lusztig character $\R{\G}{\T_w}(1)$ and compute the scalar
 product with the approximated Brauer character 
 \[ \begin{split}
 \varphi_{10} = -{[3,-,1]}\,\breve{} + {[2,1,1]}\,\breve{} +
 \beta \cdot {[-,3,1]}\,\breve{} + \gamma \cdot {[1,-,3]}\,\breve{}
 -\beta \cdot {[1,2,1]}\,\breve{}\\ + \beta \cdot {[1^2,1,1]}\,\breve{}
 -{[1,1^2,1]}\,\breve{} - \beta \cdot{[1^3,-,1]}\,\breve{} -\gamma
 \cdot {[-,1,3]}\,\breve{} &+ {[-,1^3,1]}\,\breve{}.
\end{split}
 \]
We obtain the scalar products $2\gamma-4$ for $w=s_1s_2s_3=:w'$, and
$2\beta-2\gamma-2$ for $w=s_1s_2s_1s_2s_3=:w''$; elements of lesser
length yield scalar product $0$. Note that with the \GAP-part of
\CHEVIE\ we get:
\begin{eqnarray*}
  \R{\G}{\T_{w'}}(1) &=& {[3,-,1]}-{[2,1,1]}+{[1,-,3]}+{[1,1^2,1]}
  -{[-,1,3]} - {[-,1^3,1]},\\
  \R{\G}{\T_{w''}}(1) &=& {[3,-,1]} -{[21,-,1]}
  -{[1,-,3]}-{[1,2,1]}+{[1^2,1,1]}+{[-,21,1]}\\
 && +{[-,1,3]}-{[-,1^3,1]}.
\end{eqnarray*}
Owing to the wedge shape of the decomposition matrix, we see that the
restriction of $\R{\G}{\T_{w'}}(1)$ to the principal block $B_0(G)$
coincides with the unipotent part of
\begin{equation}
\Phi_1 -2\Phi_2 - \Phi_3 + \Phi_4 + 2 \Phi_5 - \Phi_6 + (2-\alpha)\Phi_7 
-2\Phi_9+2(\gamma-2)\Phi_{10},\label{eq:gamma}
\end{equation}
and that the restriction of $\R{\G}{\T_{w''}}(1)$ to $B_0(G)$ 
is the unipotent part of
\begin{equation}
\Phi_1-\Phi_2-\Phi_3-\Phi_4+\Phi_6+\alpha\Phi_7-2\Phi_8+
  2\Phi_9+2(\beta-\gamma-1)\Phi_{10}. \label{eq:beta}
\end{equation}
Therefore, either $\gamma=2$, or \eqref{eq:gamma} gives
$(2\gamma-4) < 0$, hence $\gamma \le 2$ as claimed.
Analogously, we obtain from \eqref{eq:beta} that either $\beta = \gamma+1$,
or $\beta < \gamma + 1$, giving the bound for $\beta$, too.
\end{proof}

\begin{corollary}
 \label{cor:decnosgeneric}
 If $(q+1)_{\ell} > 5$ then $\beta = 3$ and $\gamma = 2$.
\end{corollary}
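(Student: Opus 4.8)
The plan is to obtain the corollary by simply combining the bounds that have already been established, with no further work. By Lemma~\ref{la:approxdecmat}(b), the hypothesis $(q+1)_\ell > 5$ yields the lower bounds $\gamma \ge 2$ and $\beta \ge 3$ (and also forces $\alpha = 2$). By Lemma~\ref{la:decnosgeneric}, we have the complementary upper bounds $\gamma \le 2$ and $\beta \le \gamma + 1$. Putting these together gives $\gamma = 2$ at once, and then $\beta \le \gamma + 1 = 3$, so $\beta = 3$. This matches the values $\alpha = \gamma = 2$, $\beta = 3$ recorded in part~(c) of Theorem~\ref{thm:decnumbers}, and in view of Lemma~\ref{la:approxdecmat} it completes the determination of Table~\ref{tab:decmatSO7Sp6} in this case.

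So the argument for the corollary itself is essentially a one-line bookkeeping step; all the substance lives in the two preceding lemmas. The lower bounds $\gamma \ge 2$ and $\beta \ge 3$ come from the Brauer-character relations in Lemma~\ref{la:relsfromDLchars}(c)--(e), i.e. from the $\ell$-regular restrictions of the non-unipotent, Deligne--Lusztig-type characters $\breve{\chi}_{9,3}$, $\breve{\chi}_{13,4}$, $\breve{\chi}_{23,2}$, $\breve{\chi}_{32}$ constructed in Section~\ref{subsec:charsSpSO}. The matching upper bound $\gamma \le 2$ (and $\beta \le \gamma + 1$) comes from Dudas' method~\cite{Dudas}: one picks a Weyl-group element $w$ of minimal length whose associated Deligne--Lusztig variety sees the projective cover of the modular Steinberg module, invokes the bounded perfect complex of \cite[8.10, 8.12]{BonnafeRouquier} in which that projective cover appears only in the lowest degree, and reads off a positivity constraint from the scalar product of $\R{\G}{\T_w}(1)$ with the approximated Brauer character $\varphi_{10}$ — for $G$ the relevant elements being $w' = s_1 s_2 s_3$ and $w'' = s_1 s_2 s_1 s_2 s_3$, with the proof for $G^*$ word-for-word the same.

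Consequently I do not expect any obstacle in proving the corollary: the only things to be careful about are that the two lemmas are being invoked in the correct regime $(q+1)_\ell > 5$, and that the bounds genuinely meet (they do, exactly). The real difficulties — constructing enough projective characters to force the unitriangular shape, establishing the lower bounds via the Deligne--Lusztig relations, and running Dudas' complex argument to get the upper bounds — have already been dealt with in Lemmas~\ref{la:relsfromDLchars}, \ref{la:approxdecmat} and \ref{la:decnosgeneric}; the corollary merely harvests their conclusions.
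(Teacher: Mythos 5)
Your proof is correct and matches the paper's own argument exactly: the corollary is obtained by combining the lower bounds $\beta \ge 3$, $\gamma \ge 2$ from Lemma~\ref{la:approxdecmat}(b) with the upper bounds $\gamma \le 2$, $\beta \le \gamma + 1$ from Lemma~\ref{la:decnosgeneric}. Your additional commentary on where those bounds originate (the Deligne--Lusztig character relations and Dudas' complex method) is accurate but not needed for the one-line deduction.
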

\begin{proof}
 This follows from the lower bounds for $\beta$ and $\gamma$ given
 in Lemma~\ref{la:approxdecmat} and the upper bounds of 
 Lemma~\ref{la:decnosgeneric} above.
\end{proof}

We now treat the remaining two cases $(q+1)_\ell =3$ and $(q+1)_\ell = 5$, 
thereby completing the proof of Theorem~\ref{thm:decnumbers}.

\begin{corollary}
 \label{cor:decnossmall}
 If $(q+1)_\ell = 3$ then $\beta = \gamma = 1$, and if $(q+1)_\ell = 5$ then
 $\beta = \gamma = 2$.
\end{corollary}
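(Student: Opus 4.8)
The plan is to observe that the two remaining cases require no new input beyond the bounds already recorded in Lemma~\ref{la:approxdecmat}(b): for small $(q+1)_\ell$ the upper bound $\frac{1}{2}((q+1)_\ell-1)$ appearing there is already tight, so the corollary is a purely arithmetic consequence of that lemma.

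First I would treat the case $(q+1)_\ell = 3$. Here $\frac{1}{2}((q+1)_\ell-1) = 1$, so Lemma~\ref{la:approxdecmat}(b) gives simultaneously $1 \le \beta$, $1 \le \gamma$ and $\beta \le 1$, $\gamma \le 1$, forcing $\beta = \gamma = 1$. This agrees with part~(a) of Theorem~\ref{thm:decnumbers}.

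Next I would treat the case $(q+1)_\ell = 5$. Now $\frac{1}{2}((q+1)_\ell-1) = 2$, so Lemma~\ref{la:approxdecmat}(b) yields $\beta \le 2$ and $\gamma \le 2$; and since $(q+1)_\ell = 5 > 3$, the same lemma also gives $\beta \ge 2$ and $\gamma \ge 2$. Hence $\beta = \gamma = 2$, as claimed in part~(b) of Theorem~\ref{thm:decnumbers}.

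There is no real obstacle at this stage: all the substantive work has already been carried out in Section~\ref{subsec:approx}---the upper bounds coming from inducing the projective indecomposable modules of the cyclic blocks $b$, $b^*$ of the parabolic subgroups $P$, $P^*$ (Lemmas~\ref{la:cyclicblocks} and~\ref{la:indproj}), and the lower bounds coming from the Brauer character relations built from Deligne-Lusztig characters (Lemma~\ref{la:relsfromDLchars}). Combined with Corollary~\ref{cor:decnosgeneric} for $(q+1)_\ell > 5$ and with Lemma~\ref{la:approxdecmat}(a) for the value of $\alpha$, this pins down every entry of Table~\ref{tab:decmatSO7Sp6} and so completes the proof of Theorem~\ref{thm:decnumbers}.
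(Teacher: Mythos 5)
Your proposal is correct and is exactly the paper's argument: the paper's own proof reads ``This is an immediate consequence of the bounds in Lemma~\ref{la:approxdecmat}~(b),'' and you have simply unpacked that one-line observation by evaluating the upper bound $\tfrac{1}{2}((q+1)_\ell-1)$ and matching it against the lower bounds in the two cases.
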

\begin{proof}
 This is an immediate consequence of the bounds in
 Lemma~\ref{la:approxdecmat} (b). 
\end{proof}

\section*{Acknowledgement}
The authors thank Gerhard Hiss for many insightful discussions.


\newcommand{\etalchar}[1]{$^{#1}$}

\end{document}